        \setlist[enumerate]{font=\normalfont}
        \theoremstyle{plain}
        \newtheorem*{theorem*}{Theorem}
        \newtheorem{theorem}{Theorem}[section]
        \newtheorem{corollary}[theorem]{Corollary}
        \newtheorem{lemma}[theorem]{Lemma}
        \theoremstyle{definition}
        \newtheorem{definition}[theorem]{Definition}
        \newtheorem{example}[theorem]{Example}
        \newtheorem*{example*}{Example}
        \theoremstyle{remark}
        \newtheorem{remark}[theorem]{Remark}
        \newtheorem*{remark*}{Remark}
\newcommand\ZZ{\mathbb{Z}}
\newcommand{\PP}{\mathbb{P}}
\newcommand{\CC}{\mathbb{C}}
\newcommand{\NN}{\mathbb{N}}
\newcommand{\EE}{\mathbb{E}}
\newcommand{\Hom}{\operatorname{Hom}}
\title{On random walks and switched random walks on homogeneous spaces.}
\date{}
\begin{document}
\author{Elvira Moreno}
\address{Elvira Moreno, Computational and Mathematical Sciences,
California Institute of Technology
Department of Computing and Mathematical Sciences
1200 E. California Blvd., MC 305-16
Pasadena, CA 91125-2100} 
\email{emoreno2@caltech.edu}
\author{Mauricio Velasco}
\address{Mauricio Velasco, Departamento de
  Matem\'aticas\\ Universidad de los Andes\\ Carrera 1 No. 18a 10\\ Edificio
  H\\ Primer Piso\\ 111711 Bogot\'a\\ Colombia}
\email{mvelasco@uniandes.edu.co}

\keywords{Mixing rates, random walks, switched random walks, joint spectral radius, hermitian polynomial norms}
\subjclass[2020]{60J10, 05E10, 90C23}

\begin{abstract} We prove new mixing rate estimates for the random walks on homogeneous spaces determined by a probability distribution on a finite group $G$. We introduce the switched random walk determined by a finite set of probability distributions on $G$, prove that its long-term behavior is determined by the Fourier joint spectral radius of the distributions and give hermitian sum-of-squares algorithms for the effective estimation of this quantity.
\end{abstract}

\maketitle{}

\section{Introduction}

A person shuffles a deck of $n$ cards. Her shuffling method is specified by a probability distribution $Q$ on the permutation group $S_n$. More concretely, at stage $j=1,\dots, N$ the person takes the deck in some position $v$ in $S_n$ and resuffles it to position $g_jv\in S_n$, where $g_j$ is a random element of $S_n$ selected according to the distribution $Q$, sampled independently of the chosen $g_s$ for $s<j$. The resulting process is called a {\it random walk} in the group $G=S_n$. These processes have been the focus of much work, masterfully explained in the book~\cite{D}. Under common assumptions on the distribution $Q$ such processes approach the uniform distribution on $G$ as $N$ increases (i.e., the deck of cards gets {\it evenly} mixed). A key quantitative question is to determine how quickly this occurs. More precisely, one wishes to bound the total variation distance between the distribution $Q^{N}$ of the process after $N$ stages and the uniform distribution $U$, where the {\it total variation} distance is defined as  
\[\|Q^{N}-U\|_{\rm TV}:=\max_{A\subseteq G}\left|Q^{N}(A)-U(A)\right|\]
More generally, if the group $G$ acts on a set $X$ and $x_0$ is an element of $X$ then the probability distribution $Q$ on $G$ determines a random walk $(h_k)_{k\in \mathbb{N}}$ on $X$ via the formula $h_j:=g_jg_{j-1}\dots g_1\cdot x_0$. 

In the first part of this article (Sections~\ref{Sec: 2} and~\ref{Sec: 3}) we study the behavior of such random walks on sets $X$ where $G$ acts transitively using the modules $\CC X$ over the group ring $\CC G$ which such actions determine (see Section~\ref{Sec: RW_and_modules} for details).  To describe our results precisely we establish some notation. Assume the finite group $G$ has distinct irreducible representations $(V_j,\rho_j)$ for $j=1,\dots, k$, let $\CC X$ be the permutation representation associated to the action of $G$ on $X$, that is, the space with basis given by the symbols $\{e_{x}: x\in X\}$ with the natural action of $G$ (i.e., $e_g\cdot e_x:=e_{g(x)}$) and let $\overline{u}=\frac{1}{|X|}\sum_{x\in X} u_x\in \CC X$ represent the uniform distribution on $X$. Our first result is a bound on the average squared total variation:

\begin{theorem}\label{thm: newUB} Let $Q$ be a probability distribution on $G$ and let $X$ be a $G$-homogeneous space. If $q:=\sum_{g\in G} Q(g)e_g\in \CC G$ then the following inequalities hold:

\[\frac{1}{|X|}\sum_{x\in X}\|q\cdot e_{x}-\overline{u}\|_{\rm TV}^2\leq \frac{1}{4}\sum_{V_j\neq {\rm triv}} m(V_j, \CC X) \|\hat{Q}(\rho_j)\|_{\rm Fb}^2\]

\[\frac{1}{|X|}\sum_{x\in X}\|q\cdot e_{x}-\overline{u}\|_{\rm TV}^2 \geq \frac{1}{4|X|}\sum_{V_j\neq {\rm triv}} m(V_j, \CC X) \|\hat{Q}(\rho_j)\|_{\rm Fb}^2\]
where the matrix $\hat{Q}(\rho_j)$ denotes the value of the Fourier transform of $Q$ in the representation $\rho_j$, $\|A\|_{\rm Fb}^2:={\rm Tr}(AA^*)$ and $m(V_j, \CC X)$ denotes the multiplicity of the irreducible representation $V_j$ in the $\CC G$-module $\CC X$.  If furthermore $\|q\cdot e_{x}\|_2$ is independent of $x\in X$ then for every $x\in X$ we have

\[\|q\cdot e_{x}-\overline{u}\|_{\rm TV}^2\leq \frac{1}{4}\sum_{V_j\neq {\rm triv}} m(V_j, \CC X) \|\hat{Q}(\rho_j)\|_{\rm Fb}^2\]

\[\|q\cdot e_{x}-\overline{u}\|_{\rm TV}^2 \geq \frac{1}{4|X|}\sum_{V_j\neq {\rm triv}} m(V_j, \CC X) \|\hat{Q}(\rho_j)\|_{\rm Fb}^2\]

\end{theorem}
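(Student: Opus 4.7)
The plan is to translate total variation into $\ell^2$ via standard norm inequalities, identify the resulting averaged $\ell^2$ quantity as the squared Frobenius norm of an operator on $\CC X$, and then evaluate that norm using the isotypic decomposition of $\CC X$.

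First I would record that any signed measure $v$ supported on $X$ satisfies
\[\tfrac14\|v\|_2^2\ \le\ \|v\|_{\rm TV}^2 = \tfrac14\|v\|_1^2\ \le\ \tfrac{|X|}{4}\|v\|_2^2,\]
the right-hand bound by Cauchy--Schwarz and the left-hand one by $\|v\|_2\le \|v\|_1$. Applied to $v = q\cdot e_x-\overline u$ and averaged over $x\in X$, both averaged inequalities in the statement reduce to computing the single quantity $S := \sum_{x\in X}\|q\cdot e_x-\overline u\|_2^2$.

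Second, I would compute $S$. Since $q$ is a probability measure, $q\cdot\overline u = \overline u$, and since $\overline u$ is the orthogonal projection of any $e_x$ onto the trivial sub-representation of $\CC X$, it follows that $q\cdot e_x-\overline u = (q\circ\pi^\perp)(e_x)$, where $\pi^\perp$ is the orthogonal projection onto the sum of the non-trivial isotypic components. Because $\{e_x\}_{x\in X}$ is an orthonormal basis of $\CC X$, we have $S = \|q\circ\pi^\perp\|_{\rm Fb}^2$. Choosing an orthogonal decomposition $\CC X \cong \bigoplus_j V_j^{\oplus m(V_j,\CC X)}$, the element $q\in\CC G$ acts on a single copy of $V_j$ as $\hat Q(\rho_j)$, hence on the full $V_j$-isotypic component with squared Frobenius norm $m(V_j,\CC X)\,\|\hat Q(\rho_j)\|_{\rm Fb}^2$. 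Summing over $V_j\neq \text{triv}$ gives a formula for $S$, which substituted into the norm inequalities yields the two averaged bounds after dividing by $|X|$.

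For the pointwise bounds under the extra hypothesis, I would use that $q\cdot e_x-\overline u$ has zero total mass and is therefore orthogonal to $\overline u$, so by Pythagoras $\|q\cdot e_x\|_2^2 = \|\overline u\|_2^2 + \|q\cdot e_x-\overline u\|_2^2$. If $\|q\cdot e_x\|_2$ is $x$-independent, so is $\|q\cdot e_x-\overline u\|_2$, and each pointwise term equals the average, upgrading the averaged inequalities to pointwise ones verbatim.

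I expect the main point requiring attention to be the Frobenius-norm computation in the isotypic step: one must choose the intertwiners so that the distinct copies of $V_j$ inside each isotypic component are mutually orthogonal. This is routine because the inner product on $\CC X$ is $G$-invariant and unitary intertwiners are available via Schur's lemma, after which additivity of $\|\cdot\|_{\rm Fb}^2$ across orthogonal direct sums finishes the calculation.
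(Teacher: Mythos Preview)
Your proposal is correct and follows essentially the same route as the paper's proof: bound $\|\cdot\|_{\rm TV}^2$ above and below by multiples of $\|\cdot\|_2^2$, identify $\sum_{x}\|q\cdot e_x-\overline u\|_2^2$ with a Frobenius norm on $\CC X$, and evaluate it via the block decomposition of $\CC X$ into irreducibles. The only cosmetic difference is that you write the relevant operator as $q\circ\pi^\perp$, while the paper writes it as $\phi(q)-\phi(\overline u_G)$; these coincide since $\phi(\overline u_G)$ is exactly the projection onto the trivial isotypic, and your Pythagoras argument for the constant-$\|q\cdot e_x\|_2$ case is the same computation the paper does by expanding the inner product.
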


The first part of Theorem~\ref{thm: newUB} implies the existence of {\it deterministic} initial states  
which are especially good and especially bad with respect to mixing, proving that the sum appearing in the Theorems above is a fine estimator of the mixing rate. More precisely, 

\begin{corollary} \label{Cor: good_and_bad} For every integer $N$ there exist initial states $r$ and $s$ in $X_0$ satisfying
\[\|q^N\cdot e_{r}-\overline{u}\|_{\rm TV}^2\leq \frac{1}{4}\sum_{V_j\neq {\rm triv}} m(V_j, \CC X) \|\hat{Q}(\rho_j)^N\|_{\rm Fb}^2\]

\[\|q^N\cdot e_{s}-\overline{u}\|_{\rm TV}^2\geq \frac{1}{4|X|}\sum_{V_j\neq {\rm triv}} m(V_j, \CC X) \|\hat{Q}(\rho_j)^N\|_{\rm Fb}^2\]
\end{corollary}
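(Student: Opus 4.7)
The plan is to reduce the corollary directly to Theorem~\ref{thm: newUB} applied to the $N$-fold convolution power $Q^{*N}$ of $Q$. First I would verify the two routine identities that make this substitution work. Writing $q=\sum_{g\in G}Q(g)e_g$ and expanding the product in the group algebra, one checks by induction that $q^N=\sum_{g\in G}Q^{*N}(g)\,e_g$, i.e., $q^N$ is precisely the group-algebra element associated with the distribution $Q^{*N}$. Second, since the Fourier transform at each irreducible representation is a ring homomorphism from $\CC G$ to matrices, $\widehat{Q^{*N}}(\rho_j)=\hat{Q}(\rho_j)^N$.

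With these two observations, applying Theorem~\ref{thm: newUB} to the distribution $Q^{*N}$ yields
\begin{align*}
\frac{1}{|X|}\sum_{x\in X}\|q^N\cdot e_{x}-\overline{u}\|_{\rm TV}^2 &\leq \frac{1}{4}\sum_{V_j\neq {\rm triv}} m(V_j,\CC X)\,\|\hat{Q}(\rho_j)^N\|_{\rm Fb}^2,\\
\frac{1}{|X|}\sum_{x\in X}\|q^N\cdot e_{x}-\overline{u}\|_{\rm TV}^2 &\geq \frac{1}{4|X|}\sum_{V_j\neq {\rm triv}} m(V_j,\CC X)\,\|\hat{Q}(\rho_j)^N\|_{\rm Fb}^2.
\end{align*}

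The corollary then follows by a one-line averaging argument: the minimum of a finite set of nonnegative real numbers is at most its average, so there exists $r\in X$ with $\|q^N\cdot e_{r}-\overline{u}\|_{\rm TV}^2$ at most the displayed upper bound; symmetrically, the maximum is at least the average, producing the state $s\in X$ satisfying the lower estimate. There is essentially no obstacle here beyond keeping the convolution-versus-product bookkeeping straight, and nothing in the argument requires the extra hypothesis that $\|q^N\cdot e_x\|_2$ be independent of $x$, since we are using only the averaged (first) half of Theorem~\ref{thm: newUB}.
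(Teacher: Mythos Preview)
Your proposal is correct and follows exactly the paper's approach: apply Theorem~\ref{thm: newUB} (with $Q$ replaced by $Q^{*N}$, using the identities $q^N\leftrightarrow Q^{*N}$ and $\widehat{Q^{*N}}(\rho_j)=\hat Q(\rho_j)^N$) and then observe that the minimum and maximum of a finite collection of nonnegative reals bound its average from below and above. The paper's proof is the same one-line averaging argument, stated more tersely and leaving the convolution/Fourier bookkeeping implicit.
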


The second part of Theorem~\ref{thm: newUB} specializes,  when $X=G$, to the celebrated Diaconis-Shahshahani Upper bound Lemma introduced in~\cite{DS1981} but leads to improved estimates of total variation distance whenever $X\neq G$. The reason for this improvement is that the multiplicities appearing in Theorem~\ref{thm: newUB} are $m(V_j,\CC X)$ which are no larger and typically strictly smaller than the multiplicities $\dim(V_j)$ appearing in the Upper Bound Lemma. For instance, this improvement occurs whenever $G$ acts transitively on $X$ and $|X|<|G|$ in the following Corollary

\begin{corollary}\label{Cor: Conjugation_invariant} Suppose $Q$ is a probability distribution on $G$ which is constant on the conjugacy classes of $G$ and let $Q=\sum_{j=1}^k a_j \chi_j$ be its unique representation as a sum of characters. If $q^{(N)}:=\sum_{g\in G} Q^{N}(g)e_g\in \CC G$, then for any $G$-set $X$ and any $x_0\in X$ we have
\[\|q^{(N)}\cdot e_{x_0}-\overline{u}_X\|^2_{\rm TV}\leq \frac{1}{4}\sum_{V_j\neq {\rm triv}}m(V_j,\CC X)\dim(V_j) \left(\frac{a_j|G|}{\dim(V_j)}\right)^{2N}\]
\end{corollary}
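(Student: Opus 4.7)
The plan is to specialize the stronger ``every-state'' upper bound in Theorem~\ref{thm: newUB} to the distribution $Q^{(N)}$ and then evaluate the resulting Fourier transforms via Schur's lemma and character orthogonality.

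First I verify the hypothesis $\|q^{(N)}\cdot e_x\|_2$ is independent of $x\in X$. Because $Q$ is a class function, $q$ lies in the center of $\CC G$, and so does every power $q^{(N)}$. For any $h\in G$ we thus have $q^{(N)}\cdot e_{hx_0}=h\cdot q^{(N)}\cdot e_{x_0}$, and $h$ acts on $\CC X$ by permutation of basis elements and so preserves the $2$-norm; transitivity of the $G$-action on $X$ finishes the check. The ``every-$x$'' bound in Theorem~\ref{thm: newUB} then applies to $Q^{(N)}$, and since $\widehat{Q^{(N)}}(\rho_j)=\hat{Q}(\rho_j)^N$ (convolution becomes product under the Fourier transform), I must estimate $\|\hat{Q}(\rho_j)^N\|_{\rm Fb}^2$.

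Centrality of $q$ makes $\hat{Q}(\rho_j)$ commute with the image of $\rho_j$; since $V_j$ is irreducible, Schur's lemma gives $\hat{Q}(\rho_j)=\lambda_j I_{V_j}$ for some scalar $\lambda_j$. Taking traces, $\lambda_j\dim V_j=\sum_g Q(g)\chi_j(g)$; expanding $Q=\sum_i a_i\chi_i$ and applying the orthogonality relation $\frac{1}{|G|}\sum_g\chi_i(g)\overline{\chi_j(g)}=\delta_{ij}$ together with $\chi_j(g)=\overline{\chi_{j^*}(g)}$, where $V_{j^*}$ denotes the dual irreducible representation, yields $\lambda_j=a_{j^*}|G|/\dim V_j$. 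Because $Q$ is real valued, $|a_{j^*}|=|a_j|$, so
$$\|\hat{Q}(\rho_j)^N\|_{\rm Fb}^2=|\lambda_j|^{2N}\dim V_j=\dim(V_j)\bigl(|a_j||G|/\dim V_j\bigr)^{2N}.$$
Substituting into Theorem~\ref{thm: newUB} gives the claimed inequality; note that the sum on the right is invariant under $j\leftrightarrow j^*$ since $\CC X$ is a real (hence self-dual) representation, so $m(V_j,\CC X)=m(V_{j^*},\CC X)$ and $\dim V_j=\dim V_{j^*}$.

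The substantive input is just Schur's lemma combined with the first orthogonality relation; the only mildly delicate point is the bookkeeping with the dual index $j^*$, which is vacuous whenever all characters $\chi_j$ are real (as in the symmetric group case) and is handled in general by the self-duality of $\CC X$ indicated above.
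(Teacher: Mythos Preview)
Your argument is correct and is precisely the route the paper intends: the Corollary is stated without an explicit proof, but the Remark immediately following the proof of Theorem~\ref{thm: newUB} supplies exactly your first step (conjugation-invariance forces $\|q\cdot e_x\|_2$ to be constant in $x$), after which the Schur's-lemma computation of $\hat{Q}(\rho_j)$ is the standard one. One small simplification: since $Q$ is real-valued you obtain $\lambda_j=\overline{a_j}\,|G|/\dim V_j$ directly from the trace identity $\lambda_j\dim V_j=\sum_g Q(g)\chi_j(g)=\overline{\langle Q,\chi_j\rangle}\,|G|$, so $|\lambda_j|=|a_j|\,|G|/\dim V_j$ without needing the dual index $j^*$ or the self-duality of $\CC X$.
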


In Section~\ref{Sec: RW_tabloids} we apply these  methods to estimate convergence rates for random walks on tabloids, illustrating connections between such estimates and the Kostka numbers of combinatorial representation theory.

The random walks on homogeneous spaces described so far are often easy to simulate on a computer, even in cases when the group $G$ is huge (this occurs for instance whenever the support of the distribution $Q$ is small compared to the size of the group) and therefore give us effective means of approximating the uniform distribution on $G$ by simulating the walk for $N$ stages. Such simulations allow us to understand how {\it typical elements} of the homogeneous space (i.e.,, elements uniformly chosen at random) look like, providing us with a ``statistical" description of a finite group or of a large homogeneous space. Our next result makes this idea precise by giving us a bound on the error resulting from using the random walk at stage $N$ to estimate the true average of a function on $G$. The Theorem provides a key application for the estimates of total variation obtained in Theorem~\ref{thm: newUB}. 
\begin{theorem}\label{Thm: Concentration} Assume $Z_1,\dots, Z_M$ are $M$ independent copies of the random walk on $G$ defined by $Q$ after $N$ stages, $f: G\rightarrow \CC$ is any function with $\max_{g\in G}|f(g)|\leq 1$, and $\epsilon>0$. 
If $\|Q^{N}-U\|_{\rm TV}\leq\epsilon$, then the following inequality holds
\[\PP\left\{\left|\EE_U(f) - \frac{1}{M}\sum_{i=1}^M f(Z_j)\right|\geq \epsilon \right\}\leq 2\exp\left(-\frac{M\left(\epsilon-\|Q^{N}-U\|_{\rm TV}\right)^2}{2}\right)\]
\end{theorem}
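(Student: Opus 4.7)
The plan is a standard two-step concentration argument. I would split the quantity of interest via the triangle inequality:
\[\left|\EE_U(f) - \frac{1}{M}\sum_{i=1}^M f(Z_i)\right| \leq \bigl|\EE_U(f) - \EE_{Q^N}(f)\bigr| + \left|\EE_{Q^N}(f) - \frac{1}{M}\sum_{i=1}^M f(Z_i)\right|,\]
isolating a \emph{bias} term (the gap between the uniform and walk means of $f$) from a \emph{fluctuation} term (the deviation of the empirical average from its expectation under $Q^N$).

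For the bias, I would write $\EE_U(f) - \EE_{Q^N}(f) = \sum_{g\in G} f(g)\bigl(U(g) - Q^N(g)\bigr)$ and use the dual characterization of total variation distance (essentially the Jordan decomposition of the signed measure $U-Q^N$ combined with $|f|\leq 1$) to obtain $\bigl|\EE_U(f) - \EE_{Q^N}(f)\bigr| \leq \|Q^N - U\|_{\rm TV}$. For the fluctuation term, the $f(Z_i)$ are i.i.d.\ copies of a random variable bounded in modulus by $1$ with mean $\EE_{Q^N}(f)$, so Hoeffding's inequality (applied to the real and imaginary parts separately if $f$ is complex-valued) yields
\[\PP\!\left(\left|\EE_{Q^N}(f) - \frac{1}{M}\sum_{i=1}^M f(Z_i)\right| \geq t\right) \leq 2\exp\!\left(-\frac{Mt^2}{2}\right)\]
for every $t>0$.

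Combining the two steps, whenever $\bigl|\EE_U(f) - \tfrac{1}{M}\sum_i f(Z_i)\bigr|\geq \epsilon$ occurs, the triangle inequality and the bias estimate force the fluctuation term to exceed $\epsilon - \|Q^N - U\|_{\rm TV}$, a quantity that is non-negative by the hypothesis $\|Q^N - U\|_{\rm TV}\leq \epsilon$. Setting $t = \epsilon - \|Q^N - U\|_{\rm TV}$ in the Hoeffding bound above then produces the desired estimate. There is no real conceptual obstacle here; the argument is a textbook combination of the triangle inequality with Hoeffding. The only point of care is the precise constant in the bias bound, which depends on the chosen convention for $\|\cdot\|_{\rm TV}$ and on whether $f$ is taken to be real- or complex-valued, but this affects only the implicit constants and the exponent tracks through cleanly.
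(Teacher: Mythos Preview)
Your proposal is correct and follows essentially the same approach as the paper: split via the triangle inequality into a bias term bounded by $\|Q^N-U\|_{\rm TV}$ and a fluctuation term handled by Hoeffding's inequality, then set $t=\epsilon-\|Q^N-U\|_{\rm TV}$. The paper's proof is identical in structure and detail.
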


A concrete application of Theorem~\ref{Thm: Concentration} for estimating the average features of traveling salesman tours is discussed in Example~\ref{example: TSP_average}.
 
In the second part of this article (Section~\ref{Sec: 4}) we introduce a generalization of the random walk model. The random walk model for card shuffling has a strong assumption, namely that the probability distribution of allowed moves is assumed to be the same at every stage. While this may accurately describe the behavior of a proficient card mixer it may not be adequate for describing many real-life mixing behaviors. A more general approach would be to assume the mixer has a collection of distributions $Q_1,\dots, Q_m$ on $G$ and uses them in some order $w_1w_2\dots $ with $w_i\in [m]$ to shuffle the deck (where the chosen order is perhaps unknown even to the mixer).
We call these more complicated processes {\it switched random walks} by analogy with {\it switched dynamical systems}~\cite{AJ1,AJ2}, which motivated our definition. We ask the following basic questions about the switched random walk defined by a set of distributions $Q_1,\dots, Q_m$:

\begin{enumerate}
\item Does the deck get evenly mixed {\it regardless of the order in which the $Q_i$'s are used}? When the answer is yes, we say that the set $\{Q_1,\dots, Q_m\}$ has the {\it adversarial mixing property}.
\item If $\{Q_1,\dots, Q_m\}$ has the adversarial mixing property then we would like quantitative estimates of how quickly this occurs in the {\it worst case}. By this we mean estimating the maximum total variation distance between the distribution of the process after $N$-stages and the uniform distribution on the permutations of the deck. 
\end{enumerate}

The methods developed for random walks can sometimes be extended to the switched setting. For instance Theorem~\ref{thm: newUB} easily implies the following Corollary, where $X$ is any $G$-set and $x_0\in X$.

\begin{corollary} Suppose $Q_1,\dots Q_m$ are probability distributions on $G$ that are constant on conjugacy classes and let $Q_i=\sum_{j=1}^k a_j^{i} \chi_j$ be their unique representations as sums of characters. For a word $w=w_1w_2\dots w_N$ with $w_i\in [m]$ let $Q^{(w)}$ be the convolution $Q^{(w)}:=Q_{w_1}\ast \dots \ast Q_{w_N}$. If $q^{w}:=\sum_{g\in G}Q^{(w)}(g)e_g$, then the following inequality holds 
\[\|q^{w}\cdot e_{x_0}-\overline{u}_X\|^2_{\rm TV}\leq \frac{1}{4}\sum_{V_j\neq {\rm triv}}m(V_j,\CC X)\dim(V_j)\left(\frac{|G|}{\dim(V_j)}\right)^{2N}\left(\prod_{i=1}^N a_j^{i}\right)^2\]
\end{corollary}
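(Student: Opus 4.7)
The plan is to reduce to Theorem~\ref{thm: newUB} applied to the single distribution $Q^{(w)}$, following the template used to establish Corollary~\ref{Cor: Conjugation_invariant}. Because each $Q_i$ is a class function, the element $q_i:=\sum_{g\in G}Q_i(g)e_g$ lies in the center of $\CC G$; products of central elements are central, so $q^{w}=q_{w_1}\cdots q_{w_N}$ is also central and hence $Q^{(w)}$ is itself a class function. By the same Schur's-lemma computation used in the proof of Corollary~\ref{Cor: Conjugation_invariant}, for each nontrivial irreducible one has $\hat{Q}_i(\rho_j)=\frac{a_j^i|G|}{\dim(V_j)}I_{V_j}$, and since the Fourier transform carries convolution to matrix multiplication this yields
\[\widehat{Q^{(w)}}(\rho_j)=\prod_{i=1}^N \hat{Q}_{w_i}(\rho_j)=\left(\frac{|G|}{\dim(V_j)}\right)^{N}\left(\prod_{i=1}^N a_j^{w_i}\right)I_{V_j},\]
so that
\[\|\widehat{Q^{(w)}}(\rho_j)\|_{\rm Fb}^2=\dim(V_j)\left(\frac{|G|}{\dim(V_j)}\right)^{2N}\left(\prod_{i=1}^N a_j^{w_i}\right)^{2}.\]

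Next I would verify the hypothesis needed to invoke the pointwise (rather than averaged) upper bound of Theorem~\ref{thm: newUB}, namely that $\|q^{w}\cdot e_x\|_2$ is independent of $x\in X$. This follows from the centrality of $q^w$ together with transitivity of the $G$-action on $X$: for any $g\in G$ one has $q^w\cdot e_{gx}=q^wg\cdot e_x=gq^w\cdot e_x=g\cdot(q^w\cdot e_x)$, and since $g$ acts as a permutation of the orthonormal basis $\{e_x\}_{x\in X}$ it is unitary for the standard inner product, giving $\|q^w\cdot e_{gx}\|_2=\|q^w\cdot e_x\|_2$. Substituting the Frobenius norm computation above into the pointwise upper bound of Theorem~\ref{thm: newUB} then produces precisely the inequality claimed (reading the displayed product $\prod_{i=1}^N a_j^{i}$ in the statement as the natural $\prod_{i=1}^N a_j^{w_i}$).

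There is no serious obstacle in this argument; the only mildly delicate point is the $L^2$-isotropy verification, which depends crucially on the centrality of each $q_i$. Without the class-function hypothesis the matrices $\hat{Q}_i(\rho_j)$ would cease to be scalar multiples of the identity, the product formula above would not collapse to a single scalar factor per irreducible, and one would be forced into the joint-spectral-radius analysis developed in the remainder of Section~\ref{Sec: 4}. With class functions in hand, however, the whole argument is a direct exercise in Fourier duality combined with Theorem~\ref{thm: newUB}.
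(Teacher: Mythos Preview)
Your proposal is correct and follows precisely the approach the paper indicates: the corollary is stated only in the introduction with the remark that ``Theorem~\ref{thm: newUB} easily implies the following Corollary,'' and no separate proof is given. Your argument---reducing to Theorem~\ref{thm: newUB} by observing that centrality of each $q_i$ makes $q^w$ central (hence $Q^{(w)}$ a class function), computing each $\hat{Q}_i(\rho_j)$ as a scalar via Schur's lemma, multiplying, and then invoking the pointwise form of Theorem~\ref{thm: newUB} after checking the $\ell_2$-isotropy hypothesis---is exactly the intended derivation and matches the template the paper uses implicitly for Corollary~\ref{Cor: Conjugation_invariant}. Your reading of $\prod_{i=1}^N a_j^{i}$ as $\prod_{i=1}^N a_j^{w_i}$ is also the only sensible one.
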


The assumption that the $Q_i$ are constant on conjugacy classes makes the dynamics much simpler because, via Fourier transform, reduces it to the problem of understanding the behavior of products of {\it commuting} matrices. To study the general non-commutative case we introduce the {\it Fourier joint spectral radius} of a set of distributions $Q_1,\dots, Q_m$ on $G$ relative to a $G$-set $X$, defined as
\[\overline{\omega}_X\left(Q_1,\dots,Q_m\right):=\max_{\rho_j\in \CC X, \rho_j\neq {\rm triv}} \left(jsr\left(\widehat{Q_1}(\rho_j),\dots ,\widehat{Q_m}(\rho_j)\right)\right)\]
where the maximum is taken over the irreducible representations $\rho_j$ of $G$ appearing with non-zero mutiplicity in $\CC X $ and the symbol ${\rm jsr}$ denotes the joint spectral radius of a set of matrices (see Section~\ref{Sec:Fourier-jsrs} for preliminaries on joint spectral radii).
Our next result proves that the Fourier spectral radius captures the asymptotic worst case behavior of the total variation distance to the uniform distribution, 

\begin{theorem}\label{Thm: Fourier_jsrs} For a word $w=w_1w_2\dots w_N$ with $w_i\in [m]$ let $Q^{(w)}$ be the convolution $Q^{(w)}:=Q_{w_1}\ast \dots \ast Q_{w_N}$. If $q^{w}:=\sum_{g\in G}Q^{(w)}(g)e_g\in \CC G$, then the following equality holds for every $G$-set $X$,
\[\lim_{N\rightarrow \infty} \left(\max_{x_0, w:|w|=N}\|q^{(w)}\cdot e_{x_0} -\overline{u}\|_{\rm TV}\right)^{\frac{1}{N}}=\overline{\omega}_X\left(Q_1,\dots, Q_m\right)\]
where the maximum is taken over all words $w$ of length $N$ and initial states $x_0\in X$. Furthermore, determining whether a set of distributions $Q_1,\dots, Q_m$ has the adversarial mixing property on $X$ is equivalent to determining whether $\overline{\omega}_X\left(Q_1,\dots, Q_m\right)<1$.
\end{theorem}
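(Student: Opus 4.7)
The plan is to reduce the theorem to Theorem~\ref{thm: newUB} applied to the convolved distribution $Q^{(w)}$, using that the Fourier transform turns convolutions into matrix products: $\widehat{Q^{(w)}}(\rho_j)=\widehat{Q_{w_1}}(\rho_j)\cdots\widehat{Q_{w_N}}(\rho_j)$ for every irreducible representation $\rho_j$ of $G$. Under this identification, words of length $N$ in the alphabet $[m]$ correspond bijectively to length-$N$ products of the Fourier matrices $\widehat{Q_i}(\rho_j)$, and the quantity $\overline{\omega}_X$ captures precisely the asymptotic growth rate of these products across all $j$.

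For the upper bound, since $\max_{x_0}\|q^{(w)}\cdot e_{x_0}-\overline{u}\|_{\rm TV}^2$ is at most $|X|$ times its average over $x_0$, the first inequality of Theorem~\ref{thm: newUB} applied with $q=q^{(w)}$ gives
\[\max_{x_0,\,|w|=N}\|q^{(w)}\cdot e_{x_0}-\overline{u}\|^2_{\rm TV}\leq \frac{|X|}{4}\sum_{V_j\neq {\rm triv}} m(V_j, \CC X)\max_{|w|=N}\|\widehat{Q_{w_1}}(\rho_j)\cdots \widehat{Q_{w_N}}(\rho_j)\|_{\rm Fb}^2.\]
Extracting $2N$-th roots and letting $N\to\infty$, each $\max_{|w|=N}\|\widehat{Q_{w_1}}(\rho_j)\cdots \widehat{Q_{w_N}}(\rho_j)\|_{\rm Fb}^{1/N}$ converges to $\mathrm{jsr}(\widehat{Q_1}(\rho_j),\dots,\widehat{Q_m}(\rho_j))$ by the standard (norm-independent) definition of the joint spectral radius, and the $N$-th root of a finite sum of positive sequences tends to the maximum of the limits of its terms. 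This yields $\limsup\leq \overline{\omega}_X$.

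For the lower bound, the inequality $\max_x(\cdot)\geq$ average combined with the second inequality of Theorem~\ref{thm: newUB} gives
\[\max_{x}\|q^{(w)}\cdot e_{x}-\overline{u}\|^2_{\rm TV}\geq \frac{1}{4|X|}\sum_{V_j\neq {\rm triv}}m(V_j,\CC X)\|\widehat{Q_{w_1}}(\rho_j)\cdots\widehat{Q_{w_N}}(\rho_j)\|_{\rm Fb}^2.\]
Dropping all summands except one indexed by any nontrivial $j^\ast$ with $m(V_{j^\ast},\CC X)>0$, taking $\max_{|w|=N}$, extracting $2N$-th roots and letting $N\to\infty$ yields $\liminf\geq \mathrm{jsr}(\widehat{Q_1}(\rho_{j^\ast}),\dots,\widehat{Q_m}(\rho_{j^\ast}))$; maximizing over admissible $j^\ast$ then gives $\liminf\geq \overline{\omega}_X$, so the limit exists and equals $\overline{\omega}_X$. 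The characterization of adversarial mixing follows directly from the established limit formula together with the fact that $\overline{\omega}_X\leq 1$ always (each $\widehat{Q_i}(\rho_j)$ has operator norm $\leq 1$, so $\overline{\omega}_X<1$ is equivalent to exponential decay of the worst-case TV, which in turn is equivalent to adversarial mixing). The main technical point to handle carefully is the asymptotic commutativity of $N$-th roots with finite sums and maxima (so that the $|X|$ and multiplicity prefactors do not affect the $1/N$ scaling); all other ingredients are routine applications of Theorem~\ref{thm: newUB} and the standard identity $\mathrm{jsr}(A_1,\dots,A_m)=\lim_N\max_{|w|=N}\|A_{w_1}\cdots A_{w_N}\|^{1/N}$.
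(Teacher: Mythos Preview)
Your argument for the limit formula is correct and is essentially the same as the paper's: both exploit the identity
\[
\sum_{x\in X}\|q^{(w)}\cdot e_x-\overline{u}\|_2^2=\sum_{V_j\neq{\rm triv}}m(V_j,\CC X)\,\|\widehat{Q_{w_1}}(\rho_j)\cdots\widehat{Q_{w_N}}(\rho_j)\|_{\rm Fb}^2,
\]
the paper directly via Lemma~\ref{lem: orto} and you via Theorem~\ref{thm: newUB}, and both then absorb the bounded prefactors into the $N$-th root.

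There is, however, a genuine gap in your treatment of the adversarial mixing characterization. The implication ``$\overline{\omega}_X<1\Rightarrow$ adversarial mixing'' is immediate from the limit formula, but the converse is not: knowing that $\lim_N(\max_{x_0,w}\|q^{(w)}\cdot e_{x_0}-\overline{u}\|_{\rm TV})^{1/N}=1$ does \emph{not} by itself preclude $\max_{x_0,w}\|q^{(w)}\cdot e_{x_0}-\overline{u}\|_{\rm TV}\to 0$; a sequence like $1/N$ has $N$-th root tending to $1$ yet still converges to zero. Your assertion that ``exponential decay is equivalent to adversarial mixing'' is precisely the missing step, and it requires an argument you do not supply. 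The paper handles this by invoking a theorem of Breuillard: if ${\rm jsr}=1$ for some $\rho_t$ appearing in $\CC X$, then there exist arbitrarily long words whose Fourier product has spectral radius (hence Frobenius norm) at least $1-\epsilon$, and Corollary~\ref{Cor: good_and_bad} then produces initial states with total variation bounded below by $(1-\epsilon)/(2\sqrt{|X|})$; pigeonhole on $X$ yields a fixed bad starting point. An alternative elementary route would be a Dobrushin-type contraction argument (if $\max_{x_0,w:|w|=N_0}\|q^{(w)}\cdot e_{x_0}-\overline{u}\|_{\rm TV}<1/2$ then the Dobrushin coefficient of every length-$N_0$ product is $<1$, forcing exponential decay by submultiplicativity), but you would need to spell this out.
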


The computation of the jsr of a set of matrices is a rather difficult task and we expect this difficulty to also extend to Fourier jsrs. For instance it is known that it is {\it undecidable} whether the jsr of a pair of matrices is at most one~\cite{BT} and it is not known whether checking if the strict inequality holds is decidable. It is therefore a question of much interest to device methods for estimating joint spectral radii (even with the knowledge that they are bound to fail in some cases). Recent work by Ahmadi, de Klerk and Hall~\cite{PN} gives a hierarchy of polynomial norms that can be used to produce a sequence of converging upper bounds to the jsr of a set of matrices. In the final section (Section~\ref{Sec: Estimation_Fourier_jsrs}) of this article we extend their results to norms on complex vector spaces that are expressible as hermitian sums-of-squares, allowing us to estimate Fourier spectral radii via Hermitian semidefinite programming.

{\bf Acknowldegments.} We wish to thank Michael Hoegele, Mauricio Junca and Pablo Parrillo for useful conversations during the completion of this work. Mauricio Velasco was partially supported by proyecto INV-2018-50-1392 from Facultad de Ciencias, Universidad de los Andes and by Colciencias grant EXT-2018-58-1548.

\section{Preliminaries}
\label{Sec: 2}
\subsection{Representation theory of finite groups} Throughout the article $G$ will denote a finite group. By a {\it representation} of $G$ we mean a pair $(V,\rho)$ where $V$ is a finite dimensional vector space over the complex numbers and  $\rho: G\rightarrow GL(V)$ is a group homomorphism. A {\it morphism} between the representations $(V_1,\rho_1)$ and $(V_2,\rho_2)$ is a linear map $\psi: V_1\rightarrow V_2$ with the property that $\psi\circ\rho_1(g) = \rho_2(g)\circ \psi$ for every $g\in G$. A subspace $W\subseteq V$ is an {\it invariant subspace} if $\rho(g)(W)\subseteq W$ for all $g\in G$. A representation $(V,\rho)$ is {\it irreducible} if its only invariant subspaces are $0$ and $W$. An {\it invariant inner product} on a representation $V$ is a hermitian inner product satisfying $\langle \rho(g)u,\rho(g)(v)\rangle = \langle u,v\rangle$ for all $g\in G$ and $u,v\in V$. Throughout the article we will use the following fundamental facts about complex representations of finite groups (see~\cite[Chapter 1]{FH} for proofs):

\begin{enumerate}
\item Every finite group $G$ has finitely many pairwise non-isomorphic irreducible representations, which we will denote with $V_1,\dots, V_k$.
\item The irreducible representations are the builing blocks of all representations in the sense that for any representation $(\Lambda,\rho)$ there is an isomorphism of representations
\[ \Lambda\cong \bigoplus_{j=1}^k V_j^{m_j}\]
where the integers $m_j$, called multiplicities are uniquely specified. We write $m(V_j,V):=m_j$.

\item Every irreducible representation has an invariant inner product, unique up to multiplication by a positive real number and we fix a basis $B_j$ for each $V_j$, orthonormal with respect to this product. In this basis the matrices $[\rho(g)]_{B_j}$ are unitary. If $\langle, \rangle$ is an invariant inner product on a representation $\Lambda$ then there is an orthonormal basis for $\Lambda$, compatible with the isomorphism in $(2)$ with respect to which the maps of the representation are simultaneously block-diagonal of the form
\[ [\rho_\Lambda(g)]_B = \bigoplus_{j=1}^k \left([\rho_{V_j}(g)]_{B_j} \otimes I_{m_j}\right)\] 
where $I_{m_j}$ is an $m_j\times m_j$ identity matrix.
\item The {\it character} of a representation $V$ is a function $\chi_V: G\rightarrow \CC$ given by $\chi_V(g)={\rm Tr}(\rho(g))$. Characters are constant functions in the conjugacy classes of $G$ and the characters of the irreducible representations $V_j$ form an orthonormal basis for such functions (under the inner product $\langle f,h\rangle := \sum_{g\in G} f(g)\overline{h(g)}/|G|$).

\end{enumerate}

\subsection{The group ring and the Fourier transform}

The {\it group algebra} of $G$, denoted by $\CC G$ is the complex vector space with basis given by the symbols $\{e_g: g\in G\}$ endowed with the multiplication $e_g\cdot e_h:= e_{g\cdot h}$ where the dot in the right hand side is the product in $G$. This is an associative and generally non-commutative algebra of dimension $|G|$. 
If $(\Lambda,\rho)$ is a representation of $G$ then 
there is a linear map $\phi: \CC G\rightarrow \Hom(\Lambda,\Lambda)$ which sends $\sum a_g e_g$ to the map sending $w\in \Lambda$ to $\sum a_g\rho(g)(w)$. This map transforms the product in $\CC G$ into the composition of linear maps and makes $\Lambda$ into a $\CC G$-module. It is easy to see that there is a correspondence between $\CC G$-modules and representations of $G$. In particular the group algebra is itself a representation of $G$ via left-multiplication defining $\rho_{\CC G}(g)(e_h)=e_g\cdot e_h$. Three very useful facts about this representation are:
\begin{enumerate}
\item The representation $\CC G$ is isomorphic to the representation $\CC[G]$ defined as the collection of complex-valued functions $f:G\rightarrow \mathbb{C}$ endowed with the contragradient action $\rho^*(g)f(x)=f(g^{-1}x)$. We will use this isomorphism throughout. It is given explicitly by mapping a function $Q:G\rightarrow \CC$ to the element $q:=\sum_{g\in G} Q(g)e_g$. 

\item If $q_1$ and $q_2$ are the elements of $\CC G$ corresponding to functions $Q_1$ and $Q_2$ then their product $q_1q_2\in \CC G$ corresponds to the {\it convolution} $Q_1\ast Q_2$ of $Q_1$ and $Q_2$ defined as
\[Q_1\ast Q_2(h) = \sum_{g\in G} Q_1(hg^{-1})Q_2(g).\] 
\item There is an isomorphism of representations $\CC G\cong \bigoplus_{j=1}^k V_j^{\rm dim(V_j)}$ and in particular $m(V_j,\CC G)=\dim(V_j)$.

\item There is an isomorphism of algebras $\phi: \CC G\rightarrow \bigoplus_{j=1}^k \Hom(V_j,V_j)$ called the {\it Fourier transform} which is the map sending a function $f$ to the map $\bigoplus_{j=1}^k \hat{f}(\rho_j)$ where 
\[\hat{f}(\rho_j):=\sum_{g\in G} f(g)[\rho_j(g)]_{B_j}.\]
see~\cite[Exercise 3.32]{FH} for basic properties of the Fourier transform.
\end{enumerate}

\section{Random walks on homogeneous spaces and modules over group rings}
\label{Sec: 3}

A {\it homogeneous space} for $G$ is a set $X$ endowed with a transitive action of $G$. In this section we study random walks induced on $X$ by random walks on $G$. More precisely, any probability distribution $Q$ on $G$ and initial state $x_0\in X$ defines a stochastic process $(h_k)_{k\geq 1}$ on $X$, as follows:

\begin{enumerate}
\item Let $g_1,g_2,\dots$ be a sequence of independent and identically distributed random elements of $G$ having distribution $Q$. 
\item Define the random variable $h_j:=g_j \dots g_2g_1(x_0)$.
\end{enumerate}

There are two natural questions about the process $h_N$:

\begin{enumerate}
\item What is the distribution of $h_N$?
\item How does the distribution of $h_N$ vary as $N$ grows? Since the action of $G$ in $X$ is transitive, it should be fairly common (for instance when $Q$ assigns sufficiently large probability to all elements of $G$) that the process mixes $X$ evenly. More quantitatively we ask: What is the total variation distance between the distribution of $h_N$ and the uniform distribution on $X$?
\end{enumerate}

We will address the questions above using the module $\CC X$ over the group ring $\CC G$ defined by an action, borrowing the maxim of modern commutative algebra of placing a greater emphasis on modules. The material is organized as follows: Section~\ref{Sec: RW_and_modules} introduces the basic theory, Section~\ref{Sec: Mixing_bounds} contains the resulting convergence bounds and clarifies their relationship with previous work, Section~\ref{Sec: RW_tabloids}  discusses random walks on tabloids, illustrating how tools from combinatorial representation theory can be used for estimating mixing rates. Finally Section~\ref{Section: Concentration} discusses the application of mixing rates for obtaining "statistical" descriptions of homogeneous spaces and a detailed analysis of the space of traveling salesman tours.

For use throughout the Section, recall that the {\it total variation} distance between two probability distributions $P,Q$ on $X$ is given by
\[\|P-Q\|_{\rm TV} := \max_{A\subseteq X} |P(A)-Q(A)|=\frac{1}{2}\sum_{x\in X}|P(x)-Q(x)|.\].

\subsection{Random walks and modules over the group ring}\label{Sec: RW_and_modules}

Let $\CC X$ be a vector space with basis given by the set of symbols $S:=\{e_x: x\in X\}$. The action of $G$ on $X$ makes $\CC X$ into a $\CC G$-module via the map $\phi: \CC G\rightarrow Hom(\CC X, \CC X)$ given by $\phi(e_g)(e_x) := e_{g(x)}$. We will denote this action by $e_g\cdot e_x$. The following proposition shows that the module structure can be used to compute the distributions of our random walks.

\begin{lemma} If $Q$ is a probability distribution on $G$ and $T$ is a probability distribution on $X$ then the equality
\[\sum_{x\in x} \PP\{W=x\}e_x = q\cdot t\]
holds, where $q:=\sum_{g\in G} Q(g)e_g$, $t:=\sum_{x\in X} T(x)e_x$ and $W:=g(z)$ is the random variable obtained by choosing $g\in G$ and $z\in X$ independently with distributions $Q$ and $T$ respectively. In particular the distribution of $h_N$ is given by $q^{N}\cdot e_{x_0}\in \CC X$. 
\end{lemma}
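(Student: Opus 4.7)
The plan is to verify the identity by a direct computation in the module $\CC X$ and then extend it to $h_N$ by induction.

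First, I would expand $q\cdot t$ using bilinearity of the module action and the defining formula $e_g\cdot e_x = e_{g(x)}$ to get
\[q\cdot t \;=\; \sum_{g\in G}\sum_{x\in X} Q(g)\,T(x)\,e_{g(x)}.\]
Then I would regroup the sum by the image $y := g(x)$: the coefficient of $e_y$ on the right is
\[\sum_{(g,x)\in G\times X:\, g(x)=y} Q(g)\,T(x).\]
By the independence of $g$ and $z$, the joint distribution of $(g,z)$ is the product measure $Q\otimes T$, so the displayed sum is exactly $\PP\{W=y\}=\PP\{g(z)=y\}$. This gives the first equality.

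For the ``in particular'' statement I would proceed by induction on $N$. Taking $T$ to be the point mass at $x_0$ (so $t=e_{x_0}$), the base case $N=1$ follows immediately from what was just proved, since $h_1=g_1(x_0)$ has distribution $q\cdot e_{x_0}$. For the inductive step, suppose the distribution of $h_N$ is represented by $q^N\cdot e_{x_0}\in\CC X$. Write $h_{N+1}=g_{N+1}(h_N)$ where $g_{N+1}$ is independent of $h_N$ (this is the whole point of the i.i.d.\ assumption on the $g_j$'s). Applying the first part of the lemma with $Q$ the distribution of $g_{N+1}$ and $T$ the distribution of $h_N$ yields
\[\sum_{x\in X}\PP\{h_{N+1}=x\}\,e_x \;=\; q\cdot\bigl(q^{N}\cdot e_{x_0}\bigr) \;=\; q^{N+1}\cdot e_{x_0},\]
where the last equality uses associativity of the module action (which holds because $\CC X$ is a $\CC G$-module).

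There is no real obstacle here; the only point requiring care is invoking independence correctly when identifying the coefficient of $e_y$ with $\PP\{W=y\}$, and making sure at the inductive step that $g_{N+1}$ is genuinely independent of $h_N$ (which follows since $h_N$ is a function of $g_1,\dots,g_N$ alone, and the $g_j$ are i.i.d.). Everything else is bookkeeping in the group algebra.
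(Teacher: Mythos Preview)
Your argument is correct and matches the paper's proof: both expand $q\cdot t$ by bilinearity and the action $e_g\cdot e_x=e_{g(x)}$, then identify the coefficient of each $e_y$ with $\PP\{W=y\}$ via independence, and derive the statement for $h_N$ from the associativity of the $\CC G$-module action (you spell out the induction, the paper just cites associativity).
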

\begin{proof} The independence between $g$ and $z$ implies the following equality for any $\alpha\in X$ \[\PP\{W=\alpha\} = \sum_{x\in X} \sum_{g\in G: g(x)=\alpha} Q(g)T(x).\]
It follows that
\[\sum_{\alpha\in X} \PP\{W=\alpha\} e_{\alpha} =\sum_{\alpha\in X} \sum_{x\in X} \sum_{g\in G: g(x)=\alpha} Q(g)H(x) e_{\alpha} = \sum_{x\in X} \sum_{g\in G} Q(g)H(x)e_{g(x)} = q\cdot t\]
The last claim follows from the associativity relation $(q_1q_2)\cdot h= q_1\cdot\left(q_2\cdot h\right)$ which holds for all $q_1,q_2\in \CC G$ and $h\in \CC X$ because $\CC X$ is a $\CC G$-module.
\end{proof}

The previous Lemma is useful because it allows us to use the representation theory of $\CC X$ to compute the probability distribution of $h_N$. Henceforth, we endow $\CC X$ with the hermitian inner product which satisfies $\langle e_x,e_y\rangle =\delta_{xy}$. This product is invariant because $G$ acts on $X$ by permutations.

\begin{lemma}\label{lem: orto} Let $Q$ be any complex-valued function on $G$ and $q:=\sum_{g\in G}Q(g)e_g$. There exists an $|X|\times |X|$ unitary matrix $W$ such that 
\[W [\phi(q)]_S W^* = \bigoplus_{j=1}^k \hat{Q}(\rho_{j})\otimes I_{m(V_j,\CC X)}\] 
where $\widehat{Q}$ denotes the Fourier transform of $Q$.
\end{lemma}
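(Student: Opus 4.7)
The plan is to assemble the conclusion directly from facts (2) and (3) of the representation-theoretic preliminaries, together with linearity of $\phi$. The key observation is that the lemma is really a statement about the single representation $\rho_{\CC X}: G \to GL(\CC X)$ and its extension by linearity to the group algebra, once an appropriate orthonormal basis is chosen.

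First, I would note that the hermitian inner product on $\CC X$ satisfying $\langle e_x, e_y\rangle = \delta_{xy}$ is $G$-invariant: $G$ permutes the basis $S$, so it sends orthonormal bases to orthonormal bases and hence preserves the inner product. By the decomposition theorem (fact (2)) we have $\CC X \cong \bigoplus_{j=1}^k V_j^{m_j}$ as $\CC G$-modules, where $m_j := m(V_j,\CC X)$. By fact (3), because the inner product is invariant, there exists an orthonormal basis $B$ of $\CC X$ compatible with this decomposition, with respect to which the representation matrices are simultaneously block-diagonal:
\[ [\rho_{\CC X}(g)]_B = \bigoplus_{j=1}^k \bigl([\rho_{V_j}(g)]_{B_j} \otimes I_{m_j}\bigr) \quad \text{for every } g\in G.\]

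Next, I would define $W$ to be the change-of-basis matrix from $S$ to $B$. Since both $S$ and $B$ are orthonormal bases for the same hermitian inner product on $\CC X$, $W$ is unitary. For the element $q=\sum_g Q(g)e_g$, linearity of $\phi$ gives $\phi(q)=\sum_g Q(g)\rho_{\CC X}(g)$, and conjugation by $W$ sends $[\phi(q)]_S$ to $[\phi(q)]_B$. Substituting the block-diagonal expression above and pulling the sum inside the direct sum and the tensor product,
\[W[\phi(q)]_S W^* = [\phi(q)]_B = \sum_{g\in G} Q(g) \bigoplus_{j=1}^k \bigl([\rho_{V_j}(g)]_{B_j}\otimes I_{m_j}\bigr)= \bigoplus_{j=1}^k \Bigl(\sum_{g\in G} Q(g)[\rho_{V_j}(g)]_{B_j}\Bigr) \otimes I_{m_j}.\]
By the definition of the Fourier transform from fact (4) of the preliminaries, the inner sum is exactly $\hat{Q}(\rho_j)$, yielding the desired identity.

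I do not anticipate any real obstacle: every ingredient (the invariant inner product on $\CC X$, the existence of an orthonormal basis simultaneously block-diagonalizing all $\rho_{\CC X}(g)$, and the definition of $\hat Q(\rho_j)$) has already been stated in the preliminaries. The only point that deserves a line of care is the justification that $W$ is \emph{unitary} (rather than merely invertible), which follows from the fact that $S$ and $B$ are both orthonormal with respect to the same inner product. Everything else is bookkeeping: interchanging $\sum_g$ with $\bigoplus_j$ and $\otimes I_{m_j}$, which is valid because both are linear operations on matrices.
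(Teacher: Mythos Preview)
Your proof is correct and follows essentially the same approach as the paper: construct an orthonormal basis $B$ compatible with the isotypic decomposition of $\CC X$, take $W$ to be the change-of-basis matrix from $S$ to $B$ (unitary since both bases are orthonormal for the same invariant inner product), and then use linearity of $\phi$ to push the sum $\sum_g Q(g)$ inside the block-diagonal form and recognize the Fourier transform. Your write-up is in fact slightly more explicit than the paper's in justifying the $G$-invariance of the inner product and the unitarity of $W$.
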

\begin{proof} Since the inner product we defined in $\CC X$ is $G$-invariant we can use it to construct an orthonormal basis $B$ for $\CC X$ compatible with the decomposition of $\CC X = \bigoplus V_i^{m(V_i,\CC X)}$ as a representation of $G$. Letting $W$ be the change of basis matrix from the basis $S=\{e_x:x\in X\}$ to the basis $B$ we see that $W$ is unitary and that for every $g\in G$ the equality
\[W[\rho_{\CC X}(g)]_S W^* = \bigoplus_{j=1}^k [\rho_j(g)]_{B_j}\otimes I_{m(V_i,\CC X)}\] 
holds.
Since $\phi$ is linear we conclude that
\[W\phi(q)W^*=\sum_{g\in G} Q(g)\bigoplus_{j=1}^k [\rho_j(g)]_{B_j}\otimes I_{m(V_i,\CC X)}=\]
\[=\bigoplus_{j=1}^k \left(\sum_{g\in G} Q(g)[\rho_j(g)]_{B_j}\right)\otimes I_{m(V_i,\CC X)}\]
which agrees with the claimed formula by definition of Fourier transform.
\end{proof}

\begin{remark}\label{Rmk: Frobenius_Reciprocity}
In order to use Lemma~\ref{lem: orto} it is extremely useful to be able to decompose $\CC X$ into irreducibles. This process is  often simplified by the following two facts:

\begin{enumerate}
\item If $x_0\in X$ is any point and $H\subseteq G$ is the stabilizer of $x_0$ then the permutation module $\CC X$ is isomorphic to the {\it induced representation} of $G$ defined by the trivial representation of $H$~\cite[Example 3.13]{FH}

\item In particular, the multiplicities with which the irreducible representations $V_j$ appear in $\CC X$ are determined by the Frobenius reciprocity theorem~\cite[Corollary 3.20]{FH}, which implies that
\[m(V_j,\CC X) = \frac{1}{|H|}\sum_{h\in H} \chi_{V_j}(h)\]
\end{enumerate}
\end{remark}

\subsection{Mixing rates}\label{Sec: Mixing_bounds}

For any $G$-homogeneous space $X$ we let $\overline{u}\in \CC X$ be the element corresponding to the uniform distribution, that is, $\overline{u}:=\frac{1}{|X|}\sum_{x\in X} e_x$. Note that $\overline{u}_G\cdot e_{x_i}=\overline{u}$ for every $x_i\in X$. We endow $\CC X$ with the total variation norm $\|h\|_{\rm TV}:=\frac{1}{2}\sum_{x\in X} |h(x)|$ and endow $\Hom(\CC X, \CC X)$ with the Frobenius norm $\|A\|_{\rm Fb}:={\rm Tr}(AA^*)$. We are now ready to prove Theorem~\ref{thm: newUB}, our main tool for establishing convergence estimates on homogeneous spaces. The key point of the proof is that while the total variation norm is not unitarily invariant, it can be bounded on average by a unitarily invariant norm allowing us to choose the convenient orthonormal bases for our operators coming  from Lemma~\ref{lem: orto}.

\begin{proof}[Proof of Theorem~\ref{thm: newUB}] 
The equality
\[\sum_{x\in X} \|q\cdot e_x-\overline{u}\|_2^2=\|\phi(q)-\phi(\overline{u}_G)\|_{\rm Fb}^2\]
holds since both sides equal the sum of the squares of the entries of the matrix $\phi(q)-\phi(\overline{e_G})$.
Since the Frobenius norm is unitarily invariant we can compute the right hand side of this equality in any orthonormal basis. In particular, using the basis from Lemma~\ref{lem: orto} we conclude that \[\|\phi(q)-\phi(\overline{u}_G)\|_{\rm Fb}^2=\sum_{j=1}^k m(V_j,\CC X) \|\widehat{Q-U}(\rho_j)\|_{\rm Fb}^2\]
Where $U$ is the uniform disribution on $G$. For any probability distribution $Q$ on $G$ we know that $\hat{Q}({\rm trivial})=1$ and for the uniform distribution $U$ we know that
$\hat{U}(\rho_j)=0$ for all $\rho_j\neq {\rm triv}$. We thus conclude that the following equality holds

\begin{equation}
\label{eq: l2}
\sum_{x\in X} \|q\cdot e_x-\overline{u}\|_2^2=\sum_{V_j\neq {\rm triv}} m(V_j,\CC X) \|\widehat{Q}(\rho_j)\|_{\rm Fb}^2.
\end{equation}

The Cauchy-Schwarz inequality and the fact that $\|\bullet\|_2\leq \|\bullet\|_1$ imply that for every $x\in X$ we have
\begin{equation}
\label{Ineq: basic}
 \frac{1}{4}\|q\cdot e_x-\overline{u}\|_2^2 \leq \|q\cdot e_x-\overline{u}\|_{\rm TV}^2\leq \frac{1}{4}|X|\|q\cdot e_x-\overline{u}\|_2^2.
\end{equation}
Averaging the inequalities in~(\ref{Ineq: basic}) over $X$ we obtain
\[\frac{1}{4|X|}\sum_{x\in X}\|q\cdot e_x-\overline{u}\|_2^2\leq  \frac{1}{|X|}\sum_{x\in X} \|q\cdot e_x-\overline{u}\|_{\rm TV}^2\leq \frac{1}{4}\sum_{x\in X} \|q\cdot e_x-\overline{u}\|_2^2.\]
and combining these inequalities with identity~(\ref{eq: l2}) we complete the proof of the two inequalities in our main claim.

If furthermore $\|q\cdot e_x\|_2^2$ is independent of $x$ then the same is true of $\|q\cdot e_x-\overline{u}\|_2^2$ since this quantity equals $\|q\cdot e_x\|^2-2\langle q\cdot e_x,\overline{u}\rangle + \|\overline{u}\|_2^2$ which is independent of $x$ because $\langle q\cdot e_x,\overline{u}\rangle = 1/|X|$ for all $x$. As a result, for every $x\in X$ we have
\[\|q\cdot e_x-\overline{u}\|_2^2=\frac{1}{|X|}\sum_{y\in X}\|q\cdot e_y-\overline{u}\|_2^2\]
and we can replace the leftmost and rightmost terms in~(\ref{Ineq: basic}) for averages, yielding
\[
\frac{1}{4|X|}\sum_{y\in X}\|q\cdot e_y-\overline{u}\|_2^2 \leq \|q\cdot e_x-\overline{u}\|_{\rm TV}^2\leq \frac{1}{4}\sum_{y\in X}\|q\cdot e_y-\overline{u}\|_2^2
\]
which, combined with~(\ref{eq: l2}) completes the proof.
\end{proof}

\begin{remark} There are two cases where the condition that $\|q\cdot e_x\|_2$ be independent of $X$ occurs automatically because $q\cdot e_{x_2}$ is  obtained from $q\cdot e_{x_1}$  by rearranging the order of the coefficients for every $x_1,x_2\in X$. This happens
\begin{enumerate}
\item If $X=G$ because the set of coefficients of $q\cdot e_{x}$ for any $x$ is exactly the set of values of $Q$.
\item If $Q$ is constant on conjugacy classes of $G$. This holds because the equality $q\cdot e_{x_2}=\sum_{g\in G} Q(g)e_{g(x_2)}=\sum_{y\in X} c_ye_y$ implies that $c_y$ equals the sum of the $Q$-probabilities of the set $A$ of $g\in G$ with $g(x_2)=y$. It follows that if $rx_1=x_2$ then the conjugates $r^{-1}gr$ for $g$ in $A$ are the group elements which map $x_1$ to $r^{-1}y$. Since $Q$ is conjugation invariant, we have that $c_y$ is the coefficient of $r^{-1}y$ in $q\cdot e_{x_1}$. We conclude that $q\cdot e_{x_1}$ is a permutation of $q\cdot e_{x_2}$.
\end{enumerate}
\end{remark}

\begin{remark} More generally, it can be shown that if $T$ is a subgroup of $G$ which acts transitively on $X$ and $Q$ is a probability distribution which is constant on the conjugacy classes of $T$ then:
\begin{enumerate}
\item The convolution $Q^{(N)}$ is also constant on the conjugacy classes of $T$ and
\item  $\|q\cdot e_{x}\|_2$ is independent of $x\in X$.
\end{enumerate}
So in this case the bound from Theorem~\ref{thm: newUB} holds for every initial state $x_0\in X$.
\end{remark}

\begin{remark} The proof of Theorem~\ref{thm: newUB} uses the same approach as that of the celebrated Upper Bound Lemma~\cite[pag. 24]{D} for random walks on a group $G$, the Lemma was introduced in the early 80's and is still a key tool in the state-of-the-art analysis of Markov chains (see for instance~\cite{M1,M2}). 

The use of averaging allows us to extend the result to arbitrary homogeneous spaces, increasing the range of applications, and to improve the coefficients in the inequality by replacing $\dim(V_j)$ with the typically smaller $m(V_j, \CC X)$.
\end{remark}

\begin{remark} Theorem~\ref{thm: newUB} should be compared with the Upper Bound Lemma for homogeneous spaces (UBLH) from~\cite[pag. 53]{D}. 
The UBLH applies to random walks defined by distributions $Q$ on $G$ which are right invariant under the subgroup $H\leq G$ which stabilizes $x_0\in X$ (i.e.,, the distributions are forced to satisfy $Q(gh)=Q(g)$ for all $h\in H$) and the bound depends on the Fourier transforms of the induced distributions on $X$ and not on the Fourier transforms of the original distributions. Due to the restriction to right-$H$-invariant distributions and the presence of Fourier transforms of induced distributions, the UBLH has a smaller range of applicability than Theorem~\ref{thm: newUB}. On the other hand the UBLH gives inequalities valid for every initial state $x_0$, albeit at the cost of using larger constants than the $m(V_j, \CC X)$ of Theorem~\ref{thm: newUB}. It follows that the Theorems are strictly incomparable and that it may be more convenient to use one or the other, depending on the intended application.
\end{remark}

\begin{proof}[Proof of Corollary~\ref{Cor: good_and_bad}] Theorem~\ref{thm: newUB} provides us with lower and upper bounds for the average of the squared total variation over the starting points of $X$. The average of a set of real numbers is at least the smallest one and at most the largest one proving the existence of the good and bad initial states $r$ and $s$ in $X$.\end{proof}

\begin{remark} Theorem~\ref{thm: newUB} and Markov's inequality imply that for every $\alpha\geq 0$ the inequality
\[\left|\left\{x\in X: \|q^Ne_{x}-\overline{u}\|_{\rm TV} \geq \alpha\right\}\right| \leq \frac{|X|}{4\alpha^2}\sum_{V_j\neq {\rm triv}} m(V_j, \CC X) \|\hat{Q}(\rho_j)^N\|_{\rm Fb}^2\]
holds, allowing us to prove that most (and even all, when the right-hand side is $<1$) initial states mix well. In the special case when $Q$ is right-invariant under the stabilizer of a point $x_0\in X$ this inequality is weaker than the UBLH, which provides a bound for every initial state. However, the inequality above applies to arbitrary (not necessarily right-invariant) distributions $Q$.
\end{remark}

\subsection{Example: Random walks on tabloids}\label{Sec: RW_tabloids}

Fix positive integers $n,a,b$ with $a\geq b$ and $a+b=n$. Suppose we have a set of $n$ of cards and that these are placed face up forming a single row. We permute the cards by independently sampling permutations according to a fixed distribution $Q$ on $S_n$, and acting with these permutations on the row of cards. After $N$ stages we split the row of cards into two disjoint sets $A$ and $B$ of sizes $a$ and $b$ consisting of the first $a$ cards and the remaining $b$ cards respectively (reading the row of cards from left to right). We ask: How near-uniform is the set $A$ or equivalently: how random is the set partition $(A,B)$? In this Section we define random walks on tabloids, which generalize this problem, and discuss tools suitable for their analysis. 

\subsubsection{Preliminaries: partitions, tableaus and tabloids}
\label{sec: tabloids}
A partition of a positive integer $n$ is a nonincreasing sequence $\lambda_1\geq \lambda_2\geq \dots\geq \lambda_k$ with $n=\sum \lambda_i$. Partitions are partially ordered by the {\it dominance ordering} defined as $\lambda\leq \mu$ if $\sum_{i\leq j} \lambda_i \leq  \sum_{i\leq j}\mu_i$ for all $j=1,\dots, n$. A {\it tableaux} with shape $\lambda$ is a finite collection of boxes arranged in left-justified rows of sizes $\lambda_1,\dots, \lambda_k$ containing the integers $1,\dots, n$ without repetitions. Two tableaus of the same shape $\lambda$ are {\it row-equivalent} if the sets of elements in each of their corresponding rows coincide. A {\it tabloid} is a row-equivalence class of tableaus.

\begin{example} The sequence $\lambda:=(3,3,2,1)$ is a partition of $9$. The following two tableaus are row equivalent and therefore members of the same tabloid.
\begin{center}
\young(123,456,78,9)\text{ , } \young(213,564,87,9)
\end{center}
This tabloid is encoded by the ordered set partition $\left(\{1,2,3\}, \{4,5,6\}, \{7\}, \{8\}\right)$ which keeps track of the set of elements in each row.
\end{example}

A {\it generalized tableaux } of shape $\lambda$ is a tableaux $T$ of shape $\lambda$ filled with elements of $\{1,\dots, n\}$ where repetitions are allowed. The content of such a tableaux is a vector $(\mu_1,\dots,\mu_n)$ where $\mu_i$ is the number of copies of the integer $i$ appearing in $T$ for $i=1,\dots, n$. A {\it semi-standard tableaux} is a generalized tableaux where the labels are weakly increasing along rows and strictly increasing along columns.

\begin{example} If we fix the content $\mu=(2,2,1)$ then the set of all semi-standard tableaus of shapes $(3,2)$ and $(4,1)$ with content $\mu$ are given by
\begin{center}
\young(112,23)\text{ , } \young(113,22)
\text{ , }\young(1122,3)\text{ , } \young(1123,2)
\end{center}
\end{example}

\subsubsection{Conjugation-invariant distributions and random walks on tabloids}

Continuing with the example introduced at the beginning of Section~\ref{Sec: RW_tabloids}, any partition $\lambda$ of $n$ having $k$ parts allows us to partition our row of $n$ cards into $k$ sets $(A_1,\dots, A_k)$ where $A_1$ consists of the first $\lambda_1$ cards, the set $A_2$ consists of the next $\lambda_2$ cards (those in positions $\lambda_1+1,\dots,\lambda_1+\lambda_2$ along the row), $A_3$ consists of the next $\lambda_3$ cards, etc. and we can ask: how near-uniform is the resulting set partition $(A_1,\dots, A_k)$ after $N$ stages of our random walk? In this Section we address this problem when the distribution $Q$ is constant on conjugacy classes by applying the tools introduced in the article. Our main result is Corollary~\ref{Example: main}, which gives bounds on the mixing rate of the process described in the first paragraph, that is, when $\lambda$ is any partition with at most two parts. 

To begin the analysis, note that if $X$ denotes the set of tabloids of shape $\lambda$ with the natural action of $S_n$ by permutations (see Section~\ref{sec: tabloids} for basic definitions) then the process above coincides with the random walk on the homogeneous space $X$ defined by the probability distribution $Q$ on $S_n$. The corresponding permutation module $\CC X$ is well-known and plays a distinguished role in Young's construction of the irreducible representations of the group $S_n$ (see \cite[Chapter 2]{Sagan}). It is common in the literature to refer to these modules as {\it the permutation modules $M^{\lambda}$} and we will do so throughout this Section. The following Theorem~\cite[Theorem 2.11.2]{Sagan} describes their structure, where $S^{\lambda}$ denotes the irreducible representation of $S_n$ corresponding, via Young's construction, to the partition $\lambda$ (see ~\cite[Definition 2.3.4]{Sagan} for an explicit description of $S^{\lambda}$).

\begin{lemma}[Young's rule] \label{Lem: YoungRules} Let $\mu$ be a partition of $n$. The following isomorphism of representations holds
\[M^{\mu} \cong \bigoplus_{\lambda: \lambda\geq \mu}  \left(S^{\lambda}\right)^{\bigoplus K_{\lambda\mu}}\]
where the sum runs over the partitions $\lambda$ of $n$ with $\lambda\geq \mu$ in the dominance ordering of partitions and $K_{\lambda\mu}$ is the Kostka number of $(\lambda,\mu)$, that is, the number of semi-standard tableaus of shape $\lambda$ and content $\mu$.
\end{lemma}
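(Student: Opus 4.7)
The plan is to prove Young's rule via Frobenius reciprocity followed by an explicit construction of $S_\mu$-invariants in each Specht module. Let $S_\mu := S_{\mu_1}\times\cdots\times S_{\mu_k}$ be the Young subgroup of $S_n$ stabilizing the tabloid $t^\mu$ whose $i$-th row contains $\mu_1+\cdots+\mu_{i-1}+1,\dots,\mu_1+\cdots+\mu_i$. As noted in Remark~\ref{Rmk: Frobenius_Reciprocity}, $M^\mu \cong \operatorname{Ind}_{S_\mu}^{S_n}\mathbf{1}$. By Maschke semisimplicity and Schur's lemma, the multiplicity of $S^\lambda$ in $M^\mu$ equals $\dim \Hom_{S_n}(S^\lambda,M^\mu)$, and by Frobenius reciprocity this coincides with $\dim (S^\lambda)^{S_\mu}$, the dimension of the $S_\mu$-fixed subspace of $S^\lambda$. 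The problem therefore reduces to showing that $(S^\lambda)^{S_\mu}$ has a basis indexed by semistandard tableaux of shape $\lambda$ and content $\mu$.

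For the explicit construction, to each semistandard $T$ of shape $\lambda$ and content $\mu$ I would attach an invariant $v_T\in (S^\lambda)^{S_\mu}$ as follows. Pick any tableau $s(T)$ of shape $\lambda$ obtained by replacing each label $i$ in $T$ by one of the distinct integers in row $i$ of $t^\mu$, and let $v_T$ be the $S_\mu$-orbit sum of the polytabloid $e_{s(T)}:=\sum_{\pi\in C_{s(T)}}\operatorname{sgn}(\pi)\,\pi\cdot\{s(T)\}$, where $C_{s(T)}$ is the column stabilizer of $s(T)$. By construction $v_T$ is $S_\mu$-invariant, and a bookkeeping argument on supports (two distinct semistandard fillings $T,T'$ force their orbit sums to involve tabloids with different column contents) shows that the $v_T$ are linearly independent. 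This gives $\dim (S^\lambda)^{S_\mu}\geq K_{\lambda\mu}$.

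The main obstacle will be the reverse inequality, i.e., that every $S_\mu$-invariant lies in the span of the $v_T$. My plan is to expand an arbitrary invariant of $S^\lambda$ in the standard polytabloid basis $\{e_t : t \text{ standard of shape } \lambda\}$, and then apply the Garnir straightening relations in a dominance-compatible order (as in Sagan's proof of the standard basis theorem) to iteratively reduce to a combination of the $v_T$; the averaging forced by $S_\mu$-invariance will then collapse each orbit into a single semistandard representative, producing exactly the semistandard fillings of shape $\lambda$ and content $\mu$. The range of the sum is finally cut off at $\lambda\geq \mu$ by the combinatorial fact that any semistandard tableau of shape $\lambda$ and content $\mu$ must fit all $\mu_1+\cdots+\mu_k$ copies of entries from $\{1,\dots,k\}$ into its first $k$ rows (since columns strictly increase), forcing $\lambda_1+\cdots+\lambda_k\geq \mu_1+\cdots+\mu_k$ for every $k$, i.e., $\lambda\geq\mu$. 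In particular $K_{\lambda\mu}=0$ unless $\lambda\geq \mu$, which together with the dimension computation completes the proof.
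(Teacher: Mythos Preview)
The paper does not prove this lemma; it is quoted as a classical input with a citation to \cite[Theorem 2.11.2]{Sagan}, so there is no in-paper argument to compare against. Your reduction via Frobenius reciprocity to the equality $\dim (S^\lambda)^{S_\mu}=K_{\lambda\mu}$ is correct, and the last paragraph on the dominance cutoff is clean and correct.

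The two central technical steps, however, are only gestured at and in their present form do not go through. For linear independence, ``different column contents'' is not enough: polytabloids are already linearly dependent in $M^\lambda$, and after $S_\mu$-averaging the tabloid supports of different $v_T$ can overlap heavily. What is needed is a genuine leading-term argument: for each $v_T$ one must exhibit a tabloid appearing in $v_T$ with nonzero coefficient but in no $v_{T'}$ with $T'$ earlier in a suitable (column-dominance) order. That is precisely the content of the dominance-lemma machinery in Sagan's Chapter~2 and does not follow from a bookkeeping remark on supports. For spanning, the Garnir plan is more seriously problematic: Garnir relations straighten polytabloids to \emph{standard} polytabloids, not to $S_\mu$-orbit sums, and there is no mechanism by which ``$S_\mu$-invariance collapses each orbit into a single semistandard representative.'' An arbitrary invariant, expanded in the standard basis, will typically have support meeting many $S_\mu$-orbits in a complicated way, and straightening does not respect the orbit structure.

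Two honest ways to close the gap: either dualize and work directly with the semistandard homomorphisms $\bar{\theta}_T:S^\lambda\to M^\mu$ as in Sagan, where both independence and spanning are proved by a short dominance argument on generalized tableaux; or, once you have established independence rigorously (so that $\dim(S^\lambda)^{S_\mu}\geq K_{\lambda\mu}$ for every $\lambda$), invoke the RSK identity $\sum_\lambda K_{\lambda\mu}\,\dim S^\lambda = n!/(\mu_1!\cdots\mu_k!)=\dim M^\mu$ to force equality everywhere by a dimension count. Either route is standard; your current sketch does not yet contain either.
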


In order to understand the behavior of Markov chains defined by conjugation-invariant probability distributions, we need to express such distributions as sums of characters. To this end, we will use orthogonality of characters together with Frobenius remarkable character formula~\cite[Theorem 4.10]{FH}, which gives a combinatorial description of the characters of $S^{\lambda}$. Recall that the conjugacy class of a permutation $\sigma\in S_n$ is specified by its cycle type, namely the sequence $(n_1,\dots, n_n)$ where $n_j$ equals the number of $j$-cycles appearing in the unique decomposition of $\sigma$ as a product of disjoint cycles. 

\begin{lemma}[Frobenius character formula] If $\lambda=\lambda_1\geq \lambda_2\geq \dots \lambda_n\geq 0$ is a partition of $n$ then the value of the character of $S^{\lambda}$ in the conjugacy class $(n_1,\dots, n_n)$ is given by the coefficient of the monomial $x^{\ell(\lambda)}$ in the polynomial $\Delta\cdot P_n\in \CC[x_1,\dots, x_n]$ where
\[P_n(x):=\prod_{j=1}^n (x_1^j+\dots+x_n^j)^{n_j}\text{ , 
}\Delta:=\prod_{1\leq i<j\leq n} \left(x_i-x_j\right).\]
and $\ell(\lambda)=\left(\lambda_1+n-1,\lambda_2+ n-2,\dots,\lambda_{k}+n-k,\dots,\lambda_{n-1}
+1, \lambda_n\right)$.
\end{lemma}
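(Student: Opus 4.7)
The plan is to reduce the claimed formula to a symmetric-function identity and then extract the coefficient of $x^{\ell(\lambda)}$ using the alternating structure of the Vandermonde. First I would introduce the ring $\Lambda_n$ of symmetric polynomials in $x_1,\dots,x_n$, the power sums $p_j := x_1^j+\cdots+x_n^j$, the abbreviation $p_\mu := \prod_j p_j^{n_j}$ for a cycle type $\mu=(n_1,\dots,n_n)$ so that $P_n = p_\mu$, and the bi-alternant Schur polynomial $s_\lambda := a_{\lambda+\delta}/a_\delta$, where $\delta=(n-1,n-2,\dots,0)$, $a_\nu := \det(x_i^{\nu_j})$, and $a_\delta = \Delta$. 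In this notation $\ell(\lambda) = \lambda+\delta$.

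The heart of the proof is the classical identity in $\Lambda_n$
\[ p_\mu \;=\; \sum_\lambda \chi^\lambda(\mu)\, s_\lambda, \]
where the sum ranges over partitions $\lambda$ of $n$ and $\chi^\lambda(\mu)$ is the value of the character of $S^\lambda$ on the conjugacy class of cycle type $\mu$. Assuming it, multiplying both sides by $\Delta$ gives $\Delta\cdot p_\mu = \sum_\lambda \chi^\lambda(\mu)\, a_{\lambda+\delta}$. Since each $\lambda+\delta$ is strictly decreasing and since distinct partitions $\lambda$ produce distinct such tuples, the monomial $x^{\lambda+\delta}$ appears in $a_{\nu+\delta}$ (for a partition $\nu$) only when $\nu=\lambda$, and in that case with coefficient $+1$ coming from the identity permutation in the defining alternating sum. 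Extracting the coefficient of $x^{\ell(\lambda)}$ on both sides then reads off $\chi^\lambda(\mu)$ on the right and the desired quantity on the left, completing the proof modulo the displayed identity.

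The main obstacle is proving the identity $p_\mu = \sum_\lambda \chi^\lambda(\mu)\, s_\lambda$. The cleanest route is the Frobenius characteristic map $\mathrm{ch}:\mathrm{Class}(S_n)\to \Lambda_n$ defined by $\mathrm{ch}(f) := \sum_\mu (f(\mu)/z_\mu)\, p_\mu$, where $z_\mu$ is the size of the centralizer of a permutation of cycle type $\mu$. Two properties must be established: (a) $\mathrm{ch}$ is a linear isometry from the standard inner product on class functions to the Hall inner product on $\Lambda_n$, which follows from the orthogonality relations $\langle p_\mu,p_\nu\rangle_{\mathrm{Hall}} = z_\mu\, \delta_{\mu\nu}$; and (b) $\mathrm{ch}(\chi^\lambda) = s_\lambda$, which is the key nontrivial step. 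I would prove (b) by using Lemma~\ref{Lem: YoungRules} (Young's rule) to compute $\mathrm{ch}$ on the permutation character $\chi_{M^\mu}$, obtaining an expansion in complete homogeneous symmetric functions $h_\mu$, and comparing with the Jacobi--Trudi expansion of $s_\lambda$; the dominance triangularity of both expansions forces the identification. With (a) and (b) in hand, applying $\mathrm{ch}$ to the class indicator $1_\mu$---once via its defining formula, giving $p_\mu/z_\mu$, and once via its character expansion $1_\mu = \sum_\lambda (\chi^\lambda(\mu)/z_\mu)\chi^\lambda$, giving $\sum_\lambda (\chi^\lambda(\mu)/z_\mu)\, s_\lambda$---yields the desired identity after cancelling the common factor $1/z_\mu$.
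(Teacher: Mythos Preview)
The paper does not supply a proof of this lemma; it is quoted as a classical result with a citation to \cite[Theorem 4.10]{FH}, and the text moves directly on to Lemma~\ref{lem: 2rows}. So there is no in-paper argument to compare against.

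Your outline is the standard symmetric-function proof and is correct in its essentials. The reduction to the identity $p_\mu=\sum_\lambda\chi^\lambda(\mu)\,s_\lambda$ and the subsequent coefficient extraction from $\Delta\cdot p_\mu=\sum_\lambda\chi^\lambda(\mu)\,a_{\lambda+\delta}$ are both valid; the key observation that the exponent tuples $\ell(\lambda)=\lambda+\delta$ are strictly decreasing, and hence as multisets determine $\lambda$, is exactly what makes the extraction work. Your route to $\mathrm{ch}(\chi^\lambda)=s_\lambda$ via Young's rule and Kostka-triangularity is also sound: once one checks directly that $\mathrm{ch}(\chi_{M^\mu})=h_\mu$ and uses the combinatorial expansion $h_\mu=\sum_\lambda K_{\lambda\mu}\,s_\lambda$, the unitriangularity of $(K_{\lambda\mu})$ in the dominance order forces the identification. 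Just be aware that this step imports several nontrivial symmetric-function facts (orthogonality of power sums in the Hall inner product, the $h$-to-$s$ expansion via Kostka numbers, and Jacobi--Trudi if you invoke it), so a self-contained write-up would need to prove or cite each of these---which is precisely why the paper defers to Fulton--Harris rather than reproducing the argument.
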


\begin{lemma} \label{lem: 2rows} Let $\lambda=a\geq b$ be a partition of $n$. If $C_k$ denotes the conjugacy class of a $k$-cycle in $S_n$ then
\[\chi_{S^{\lambda}}(C_k) = \left[\binom{n-k}{a}-\binom{n-k}{a+1}\right] + \left[\binom{n-k}{b}-\binom{n-k}{b-1}\right]\]
and furthermore 
\[\dim(S^{\lambda}) = \binom{n}{a}-\binom{n}{a+1}=\binom{n}{b}-\binom{n}{b-1}\]
\end{lemma}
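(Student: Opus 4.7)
The approach I will take is to decompose the relevant permutation module via Young's rule and then count fixed subsets directly; Frobenius's formula stated in the excerpt is an available alternative but is combinatorially heavier for two-row shapes.

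First, I would observe that any partition $\nu$ of $n$ with $\nu \geq (a,b)$ must have at most two parts, since a third part $\nu_3 \geq 1$ would force $\nu_1 + \nu_2 \leq n-1 < a+b$. A short semistandard-tableau count then gives $K_{(n-j,\,j),(a,b)} = 1$ for each $0\leq j\leq b$: the unique SSYT has all twos in the bottom row and exactly $b-j$ twos at the right end of the top row (column-strictness forces the remaining $a$ top-row cells to be ones). Young's rule (Lemma~\ref{Lem: YoungRules}) then yields
\[ M^{(a,b)} \;\cong\; \bigoplus_{j=0}^{b} S^{(n-j,\,j)}, \]
which telescopes to the virtual-character identity
\[ \chi_{S^{(a,b)}} \;=\; \chi_{M^{(a,b)}} \;-\; \chi_{M^{(a+1,b-1)}} \]
(with the convention that the subtrahend is zero when $b=0$).

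Next, I would evaluate $\chi_{M^{(c,d)}}(C_k)$ for a general two-row shape $(c,d)$ with $c+d=n$. Since $M^{(c,d)}$ is the permutation module on tabloids of shape $(c,d)$, which I identify with $d$-subsets of $\{1,\dots,n\}$ via the bottom row, its character on $\sigma$ counts the $\sigma$-invariant $d$-subsets. A representative of $C_k$ has a single $k$-element orbit together with $n-k$ fixed points, so an invariant $d$-subset either consists entirely of $d$ fixed points, or contains the full $k$-orbit together with $d-k$ additional fixed points. This gives
\[ \chi_{M^{(c,d)}}(C_k) \;=\; \binom{n-k}{d} + \binom{n-k}{d-k}, \]
with the convention $\binom{n-k}{d-k}=0$ when $d<k$.

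Combining these two steps produces
\[ \chi_{S^{(a,b)}}(C_k) \;=\; \left[\binom{n-k}{b} - \binom{n-k}{b-1}\right] + \left[\binom{n-k}{b-k} - \binom{n-k}{b-k-1}\right], \]
and the binomial symmetry $\binom{n-k}{m} = \binom{n-k}{(n-k)-m}$, together with $a = n-b$, rewrites $\binom{n-k}{b-k} = \binom{n-k}{a}$ and $\binom{n-k}{b-k-1} = \binom{n-k}{a+1}$, yielding the stated character formula. The dimension assertion will follow from the same argument applied at the identity (where $\chi_{M^{(c,d)}}(e) = \binom{n}{d}$), and the equality of the two displayed expressions is again the binomial symmetry. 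I do not anticipate a conceptual obstacle; the only real source of friction is careful boundary bookkeeping (out-of-range binomial coefficients, and degenerate cases such as $b=0$ or $k>b$), which is handled by the standard convention $\binom{m}{j}=0$ for $j\notin\{0,\dots,m\}$.
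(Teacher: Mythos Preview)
Your proof is correct and takes a genuinely different route from the paper's. The paper applies the Frobenius character formula directly: it identifies the two terms of the Vandermonde expansion whose exponents lie componentwise below $\ell(\lambda)=(a+n-1,b+n-2,n-3,\dots,0)$, factors $P_{C_k}(x)=(x_1+\cdots+x_n)^{n-k}(x_1^k+\cdots+x_n^k)$, and reads off the four relevant multinomial coefficients. Your argument instead exploits the multiplicity-free decomposition of $M^{(a,b)}$ from Young's rule to write $\chi_{S^{(a,b)}}=\chi_{M^{(a,b)}}-\chi_{M^{(a+1,b-1)}}$, then evaluates each permutation character by a direct fixed-subset count and finishes with binomial symmetry. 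Your approach is lighter for two-row shapes---it avoids the generating-function machinery entirely and makes the telescoping structure transparent---while the paper's method, though heavier here, is uniform across all shapes $\lambda$ and does not rely on the special multiplicity-one feature of two-row permutation modules.
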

\begin{proof} Since $\Delta$ is the determinant of the Vandermonde matrix with $i,j$ entry given by $x^j_{n-i+1}$ we know that 
\[\Delta = \sum_{\sigma\in S_n} {\rm sgn}(\sigma) x_n^{\sigma(1)-1}\dots x_1^{\sigma(n)-1}\]
and in particular, only two terms have exponents which are componentwise smaller than $\ell(\lambda) = (a+n-1,b+n-2,n-3,n-4,\dots, 1,0)$, namely 
\[
 x_1^{n-1}x_2^{n-2}x_{3}^{n-3}\dots x_{n-2}^2x_{n-1}^1x_n^0\text{ and }- x_1^{n-2}x_2^{n-1}x_{3}^{n-3}\dots x_{n-2}^2x_{n-1}^1x_n^0.\]
Since $P_{C_k}(x)=A(x)(x_1^k+\dots+x_n^k)$, where $A(x)=(x_1+\dots+x_n)^{n-k}$, we conclude from the Frobenius' character formula and the observation from the previous paragraph that $\chi_{S^{\lambda}}(C_k)$ is given by
\[ [P(x)]_{(a,b)} - [P(x)]_{(a+1,b-1)}\]
where we have removed the $n-2$ trailing zeroes in our notation for exponents. Since $P(x)$ factors, this quantity equals
\[ [A(x)]_{(a-k,b)}+[A(x)]_{(a,b-k)} - \left([A(x)]_{(a-k+1,b-1)} + [A(x)]_{(a+1,b-k-1)}\right)\]
each of these coefficients can be computed by the multinomial Theorem, yielding
$\binom{n-k}{b} + \binom{n-k}{a}-\left(\binom{n-k}{b-1}+\binom{n-k}{a+1}\right)$. Similarly, the dimension of $S^{\lambda}$ is given by the value of its character at the identity element, given by
\[[(x_1+\dots+x_n)^n]_{(a,b)}-[(x_1+\dots+x_n)^n]_{(a+1,b-1)}\]
proving the claim.
\end{proof}

The following Corollary estimates the rate of convergence to the uniform distribution of the pure-cycle random walk on the set of disjoint pairs $(A,B)$ of sizes $a$ and $b$ respectively, with $A\cup B=[n]$.

\begin{corollary}\label{Example: main} Let $Q$ be the probability distribution which samples $k$-cycles uniformly. If $\CC X$ is the permutation module corresponding to a partition $\lambda = a\geq b$ with $a+b=n$ and $q^{(N)}:=\sum_{g\in G} Q^{N}(g)e_g\in \CC G$ then for any $x_0\in X$ the quantity  $\|q^{(N)}\cdot u_{x_0}-\overline{u}_X\|^2_{\rm TV}$ is bounded above by
\[\frac{1}{4}\sum_{t=1}^b \frac{\left(\left[\binom{n-k}{n-t}-\binom{n-k}{n+1-t}\right] + \left[\binom{n-k}{t}-\binom{n-k}{t-1}\right]\right)^{2N}}{\left(\binom{n}{t}-\binom{n}{t-1}\right)^{2N-1}}\]
\end{corollary}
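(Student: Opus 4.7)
The statement is an immediate application of Corollary~\ref{Cor: Conjugation_invariant} combined with the structure of the two-row permutation module $M^{(a,b)}$ provided by Young's rule, so my plan is largely one of bookkeeping.

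First I would verify that $Q$ is constant on conjugacy classes: since $Q$ is uniform on the set $C_k$ of $k$-cycles and zero elsewhere, this is immediate, and Corollary~\ref{Cor: Conjugation_invariant} applies. I would then compute the coefficients $a_j$ of $Q$ in the character expansion $Q=\sum_j a_j\chi_j$ via orthogonality of characters: since $\chi_j$ is real-valued on $S_n$ and $Q(g)=\mathbf{1}_{C_k}(g)/|C_k|$,
\[
a_j \;=\; \langle Q,\chi_j\rangle \;=\; \frac{1}{|G|}\sum_{g\in C_k}\frac{\chi_j(g)}{|C_k|} \;=\; \frac{\chi_j(C_k)}{|G|},
\]
so that $a_j|G|/\dim(V_j)=\chi_j(C_k)/\dim(V_j)$. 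Substituting into the bound of Corollary~\ref{Cor: Conjugation_invariant} produces
\[
\|q^{(N)}\cdot e_{x_0}-\overline{u}_X\|_{\rm TV}^2 \;\leq\; \frac{1}{4}\sum_{V_j\neq {\rm triv}} m(V_j,\CC X)\,\frac{\chi_j(C_k)^{2N}}{\dim(V_j)^{2N-1}}.
\]

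Next I would identify the irreducible constituents of $\CC X=M^{(a,b)}$ using Young's rule (Lemma~\ref{Lem: YoungRules}). The partitions of $n$ dominating $(a,b)$ are exactly those of the form $(n-t,t)$ with $0\leq t\leq b$, since $n-t\geq a$ forces $t\leq b$ and $n-t\geq t$ is automatic from $b\leq n/2$. The multiplicities are the Kostka numbers $K_{(n-t,t),(a,b)}$, and a direct inspection of semistandard tableaux of shape $(n-t,t)$ with content $(a,b)$ shows that each such tableau is forced: the second row must consist of $t$ copies of $2$ (strict column growth rules out a $1$ in row two), and then weak row-increase in row one together with strict column growth forces $1$'s in the first $t$ columns followed by the remaining $a-t$ copies of $1$ and finally the $b-t$ copies of $2$. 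Hence $K_{(n-t,t),(a,b)}=1$ for every admissible $t$, and the trivial representation corresponds to $t=0$, which we discard. This restricts the sum to $t=1,\ldots,b$, each with multiplicity one.

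Finally I would plug into Lemma~\ref{lem: 2rows} with the partition $(n-t,t)$ (playing the role of $\lambda=a\geq b$ in that lemma's notation) to get
\[
\chi_{S^{(n-t,t)}}(C_k) \;=\; \Bigl[\tbinom{n-k}{n-t}-\tbinom{n-k}{n-t+1}\Bigr]+\Bigl[\tbinom{n-k}{t}-\tbinom{n-k}{t-1}\Bigr],
\]
and using the symmetry $\binom{n}{n-t}=\binom{n}{t}$ and $\binom{n}{n-t+1}=\binom{n}{t-1}$,
\[
\dim S^{(n-t,t)} \;=\; \tbinom{n}{t}-\tbinom{n}{t-1}.
\]
Since $n-t+1$ and $t-1$ are the exponents that appear in both expressions I would note that the bracketed character values match the ones in the statement. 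Substituting these into the sum over $t=1,\ldots,b$ yields precisely the claimed bound. The only non-trivial verification is the Kostka count $K_{(n-t,t),(a,b)}=1$, which I expect to be the main (but very mild) obstacle; everything else is direct specialization of results proven earlier.
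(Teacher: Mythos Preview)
Your proposal is correct and follows essentially the same route as the paper's proof: apply Corollary~\ref{Cor: Conjugation_invariant}, compute the character coefficients $a_t=\chi_{V_t}(C_k)/|S_n|$ by orthogonality, use Young's rule to identify $\CC X=\bigoplus_{t=0}^b S^{(n-t,t)}$ with all multiplicities equal to one, and substitute the explicit character and dimension values from Lemma~\ref{lem: 2rows}. Your explicit verification that $K_{(n-t,t),(a,b)}=1$ is the only place you add detail beyond the paper, which simply asserts multiplicity one.
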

\begin{proof} By Lemma~\ref{Lem: YoungRules} the decomposition of $\CC X$ into $S_n$-irreducibles is given by $\CC X = \bigoplus_{t=0}^b S^{(a+b-t,t)}$ and in particular, no representation appears with multiplicity greater than one. For notational convenience we write $V_t:=S^{(a+b-t,t)}$ throughout this proof.
Since the distribution $Q$ is constant on conjugacy classes, it can be written uniquely as a sum of irreducible characters and we determine its coefficients $a_t$ with respect to the characters $\chi_{V_t}$ for $t=0,\dots, b$. By orthogonality of characters we have
\[a_t = \langle Q, \chi_{V_t}\rangle = \frac{1}{|S_n|}\sum_{g\in G} Q(g)\chi_{V_t}(g) = \frac{1}{|S_n|}\chi_{V_t}(\tau)\]
where $\tau$ is any $k$-cycle. By Lemma~\ref{lem: 2rows} we know that
\[a_t = \frac{1}{|S_n|}\left(\left[\binom{n-k}{a+b-t}-\binom{n-k}{a+b+1-t}\right] + \left[\binom{n-k}{t}-\binom{n-k}{t-1}\right]\right)\] 
and that $\dim(V_t)=\binom{n}{t}-\binom{n}{t-1}$. The claim now follows from Corollary~\ref{Cor: Conjugation_invariant}.
\end{proof}

\begin{figure}[h]
\centering
\includegraphics[width=5.9cm]{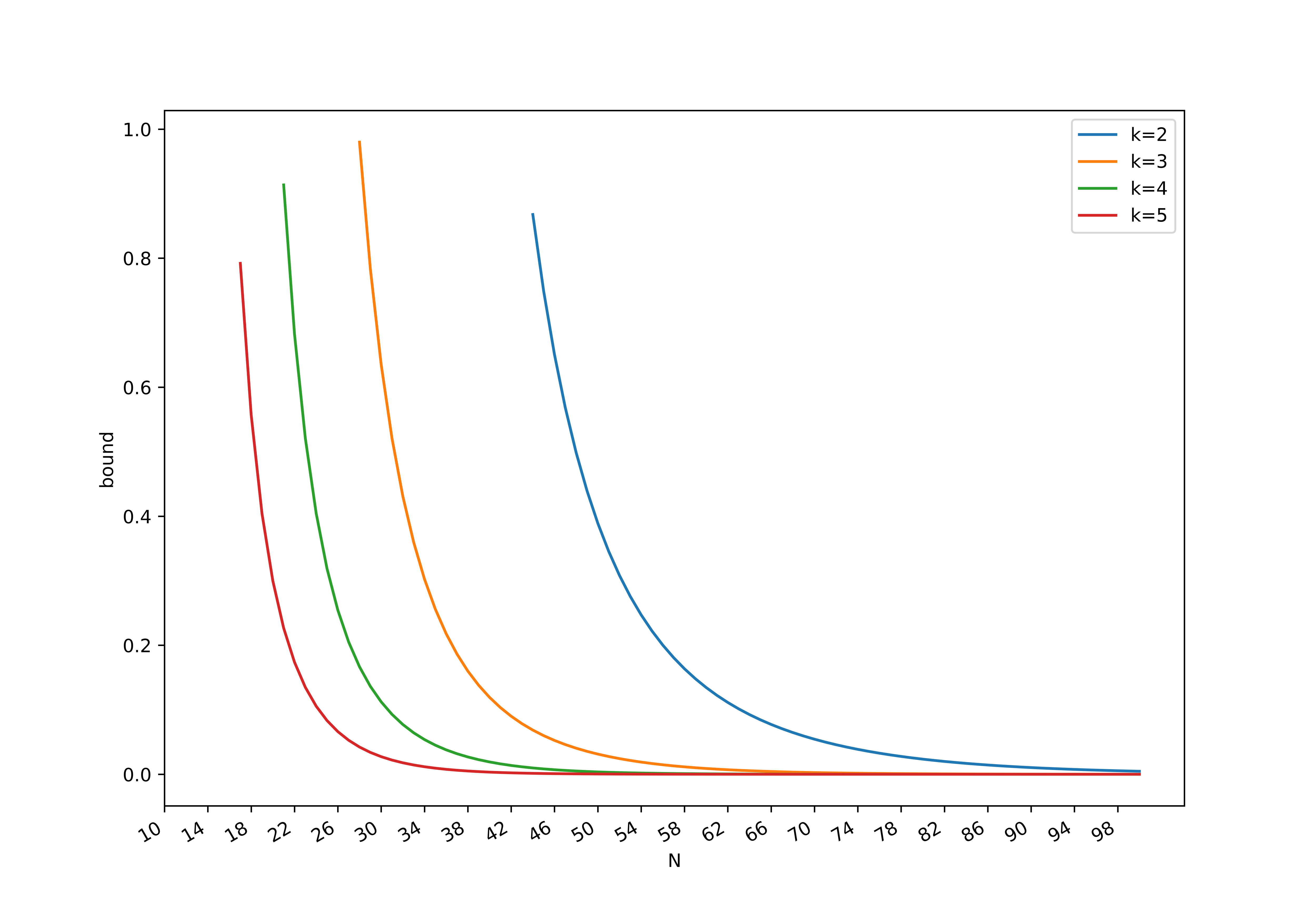} 
\includegraphics[width=5.9cm]{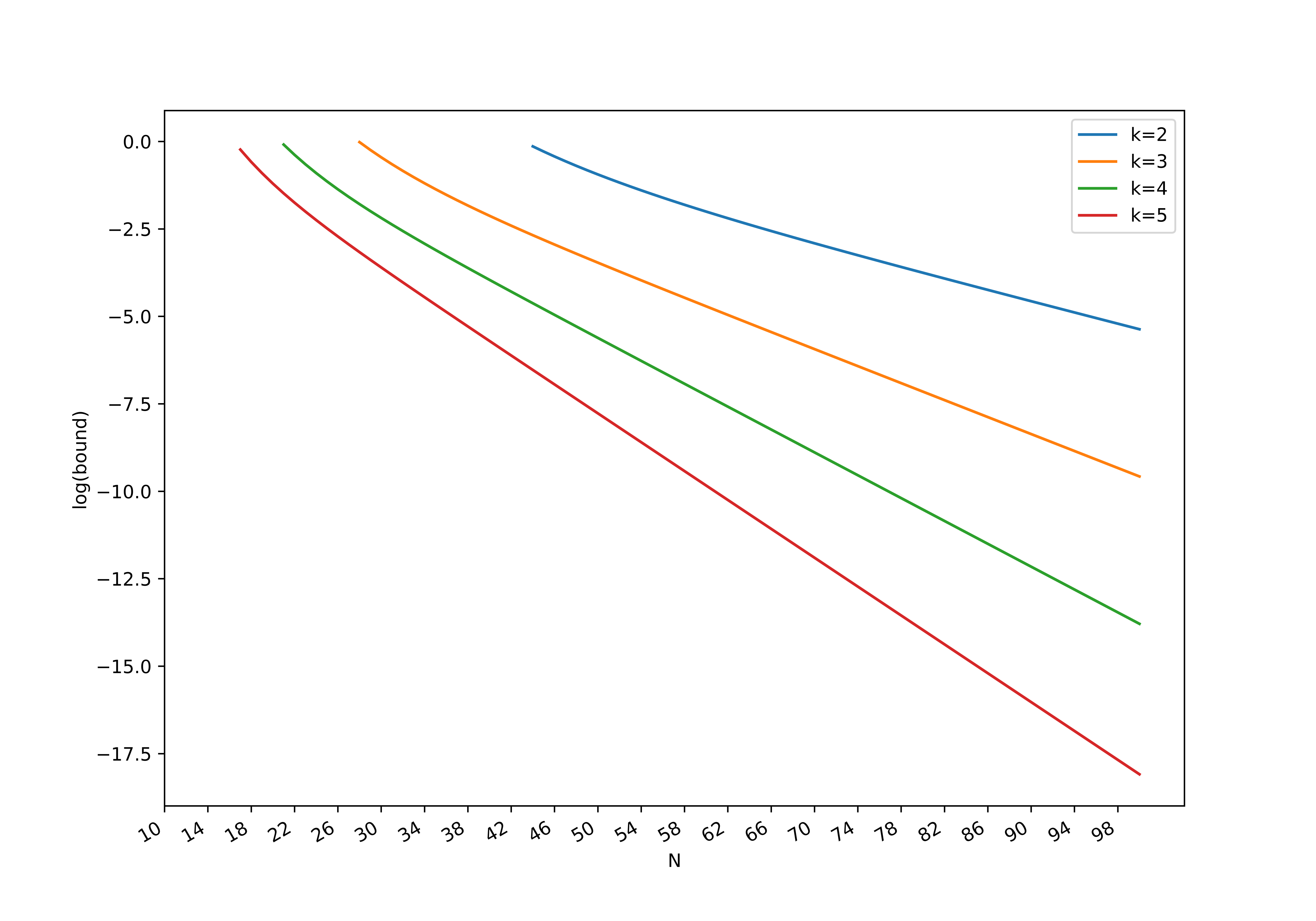}
\caption{Upper bound for total variation to uniformity for $\lambda : 26\geq 26$ with $n=52$ (the number of cards on a regular deck) and $k=2,3,4,5$ ($y$ axis in standard (left) and logarithmic (right) scales). The set $X$ has around $4.9\times 10^{14}$ elements.}
\end{figure}

\begin{remark}
The previous Corollary shows that Theorem~\ref{thm: newUB} can be applied more generally than the Upper Bound Lemma on $S_n$ since selecting a transposition uniformly at random {\it does not} mix $S_n$ (because only even transpositions can be reached in even stages), while it does converge to the uniform distribution on the homogeneous space $X$.
\end{remark}

\subsection{A concentration inequality}
\label{Section: Concentration}
In this Section we prove Theorem~\ref{Thm: Concentration} and illustrate its applicability by  analyzing random walks on traveling salesman tours.

\begin{proof}[Proof of Theorem~\ref{Thm: Concentration}] Let $\EE_{Q^N}(\bullet)$ denote the expected value with respect to the distribution of the process after $N$ stages. By the triangle inequality and the definition of total variation the following inequalities hold,
\[ \left|\EE_U(f) - \frac{1}{M}\sum_{i=1}^M f(Z_j)\right| \leq \left|\EE_U(f) -\EE_{Q^N}(f)\right|+\left|\EE_{Q^N}(f)- \frac{1}{M}\sum_{i=1}^M f(Z_j)\right|\leq \]
\[\leq\|Q^N-U\|_{\rm TV} + \left|\EE_{Q^N}(f)- \frac{1}{M}\sum_{i=1}^M f(Z_j)\right|\]
and we conclude that
\[\PP\left\{\left|\EE_U(f) - \frac{1}{M}\sum_{i=1}^M f(Z_j)\right|\geq \epsilon\right\}\leq \PP\left\{ \left|\EE_{Q^N}(f)- \frac{1}{M}\sum_{i=1}^M f(Z_j)\right|\geq \beta \right\}\]
where $\beta = \epsilon-\|Q^N-U\|_{\rm TV}$. Since $\beta>0$, the right-hand side is bounded by $2\exp\left(-M\beta^2/2\right)$ from Hoeffding's inequality for bounded random variables~\cite[Theorem 2]{Hoeffding1963} proving the claim.
\end{proof}

\begin{example} \label{example: TSP_average} Suppose $X$ is the set of tours through a fixed set of cities $1,\dots , n$ and let $\ell(x)$ be the total distance traveled in tour $x$. The set $X$ has $(n-1)!$ elements and finding a tour of least total length is the classical traveling salesman problem (TSP), of much interest in combinatorial optimization.  It is often desireable to know the {\it average cost} $\EE_{U}(f)$ of functions $f$  among all possible tours. This is especially challenging if the function depends nonlinearly on the tour. When $f$ is bounded, Theorem~\ref{Thm: Concentration} gives us a natural approach for obtaining such estimates via simulations, together with error bars on such estimates. In this example we give some nonlinear functions whose averages are of interest for the TSP and show some combinatorial techniques that can be used for obtaining the necessary total variation bounds.

Motivated by the simulated annealing approach~\cite[Section 4.4.3]{Madras} for solving the TSP, define for a fixed real number $\beta$ the probability distribution $\pi_{\beta}(x)$ on $X$, given by  the formula $\pi_{\beta}(x)=e^{-\beta \ell(x)}/C_{\beta}$, where $C_\beta(x):=\sum_{x\in X} \pi_\beta(x)$ is the associated partition function. Note that $\pi_{\beta}(x)$ assigns higher probability to to shorter tours and it is easy to see that in the limit $\beta\rightarrow\infty$ the $\pi_{\beta}(x)$-average length 
\[\overline{\ell}_\beta:=\sum_{x\in X} \ell(x)\pi_{\beta}(x)\]  converges to the length of the shortest tour. 

To estimate $\overline{\ell}_\beta$ define the functions $a_\beta(x):=\ell(x)e^{-\beta x}$, $c_\beta(x)=e^{-\beta\ell(x)}$ and the numbers $A_\beta:=\EE_{U}[a(x)]$, $C_{\beta}:=\EE_{U}[b(x)]$, and note that $\overline{\ell}_\beta = A_\beta/C_\beta$. It is natural to estimate $A_\beta$ and $C_\beta$ via their sample averages
\[
\begin{array}{ccc}
\hat{A}_\beta:=\frac{1}{M}\sum_{j=1}^M a_\beta(Z_j) & &
\hat{C}_\beta:=\frac{1}{M}\sum_{j=1}^M c_\beta(Z_j)\\
\end{array}
\]
which are easily computable with simulations. The convexity of the function $y/x$ in the positive orthant, together with Theorem~\ref{Thm: Concentration}, now imply the following Corollary which gives error bounds on these estimates. In the expressions below, $D$ is any upper bound on the length of tours (for instance the sum of the $n$ largest pairwise distances among cities). 

\begin{corollary} \label{Cor: Annealing}Let $\epsilon,\eta>0$ be real numbers. Choose $N$ large enough so that
\[\|Q^{(N)}-U\|_{\rm TV}< \frac{\epsilon e^{-2\beta D}}{D^2}\] 
and choose $M$ large enough so that 
\[2\exp\left(-\frac{M\left(\frac{\epsilon e^{-2\beta D}}{D^2}-\|Q^{(N)}-U\|_{\rm TV}\right)^2}{2}\right)<\eta.\]
If we use $M$ independent samples of the $N$-th stage of the random walk defined by the distribution $Q$ to compute the estimates $\hat{A}_\beta, \hat{C}_\beta$, then the following inequalities hold with probability at least $1-2\eta$
\[ -2\epsilon+\frac{\hat{A}_\beta}{\hat{C}_\beta}\leq \overline{\ell}_\beta=\frac{A_\beta}{C_\beta}\leq\frac{\hat{A}_\beta}{\hat{C}_\beta}+2\epsilon.\]
\end{corollary}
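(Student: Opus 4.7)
The plan is to reduce the claim to Theorem~\ref{Thm: Concentration} applied separately to the two bounded functions $c_\beta$ and $a_\beta/D$, and then convert the resulting concentration of numerator and denominator into concentration of the ratio $A_\beta/C_\beta = \overline{\ell}_\beta$ via an elementary algebraic identity.

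Concretely, set $\delta := \epsilon e^{-2\beta D}/D^2$. Since $0\leq \ell(x) \leq D$ and $\beta\geq 0$, both $|c_\beta|$ and $|a_\beta/D|$ are bounded above by $1$ pointwise. Applying Theorem~\ref{Thm: Concentration} with accuracy $\delta$ to each of these functions, and invoking the hypotheses on $N$ and $M$, yields
$$\PP\{|\hat{C}_\beta - C_\beta| \geq \delta\} < \eta \quad\text{and}\quad \PP\{|\hat{A}_\beta - A_\beta| \geq D\delta\} < \eta.$$
By the union bound, both inequalities hold simultaneously with probability at least $1 - 2\eta$; the remainder of the argument is carried out inside this event.

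The rest of the proof is deterministic. The uniform bound $\ell(x)\leq D$ gives $A_\beta\leq D$ and $C_\beta=\EE_U[e^{-\beta\ell(x)}]\geq e^{-\beta D}$; combined with $|\hat{C}_\beta - C_\beta| < \delta \leq e^{-\beta D}/2$ (valid in the admissible range of $\epsilon$), we obtain $\hat{C}_\beta\geq e^{-\beta D}/2 > 0$. Rewriting
$$\frac{A_\beta}{C_\beta} - \frac{\hat{A}_\beta}{\hat{C}_\beta} \;=\; \frac{A_\beta(\hat{C}_\beta - C_\beta) - C_\beta(\hat{A}_\beta - A_\beta)}{C_\beta \hat{C}_\beta},$$
applying the triangle inequality, and substituting the bounds just established produces the estimate $|A_\beta/C_\beta - \hat{A}_\beta/\hat{C}_\beta| \leq 2\epsilon$ after routine simplification; the convexity of $y/x$ on $\{(x,y) : x,y > 0\}$ mentioned in the statement is what justifies this linearized, mean-value-style estimate uniformly on the relevant sublevel set.

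The main obstacle is controlling the denominator $C_\beta\hat{C}_\beta$, which can be exponentially small in $\beta D$: without an effective lower bound, the ratio is ill-conditioned and pointwise concentration of $\hat{A}_\beta$ and $\hat{C}_\beta$ alone would not suffice. This is precisely why the hypothesis demands $\delta$ of order $\epsilon e^{-2\beta D}/D^2$, so that the factor $1/(C_\beta\hat{C}_\beta)\leq 2e^{2\beta D}$ appearing in the algebraic identity is absorbed and the final error is kept to order $\epsilon$.
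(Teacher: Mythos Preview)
Your argument is correct and is precisely the fleshed-out version of the paper's one-line justification (apply Theorem~\ref{Thm: Concentration} to the normalized functions $c_\beta$ and $a_\beta/D$, then control the quotient via the identity $A_\beta/C_\beta-\hat A_\beta/\hat C_\beta=\bigl[A_\beta(\hat C_\beta-C_\beta)-C_\beta(\hat A_\beta-A_\beta)\bigr]/(C_\beta\hat C_\beta)$, which is what the paper's appeal to the convexity of $y/x$ is encoding). One minor simplification: since every sample obeys $e^{-\beta\ell(Z_j)}\ge e^{-\beta D}$, the bound $\hat C_\beta\ge e^{-\beta D}$ already holds deterministically, so your side condition $\delta\le e^{-\beta D}/2$ and the accompanying ``admissible range of $\epsilon$'' are unnecessary.
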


In order to use the previous Corollary one needs good total variation bounds. For conjugation-invariant probability distributions $Q$ such bounds follow from Corollary~\ref{Cor: Conjugation_invariant} provided we have good estimates of the involved multiplicities. 
We now illustrate such multiplicity calculations  assuming, for simplicity, that $n$ is a prime number. The stabilizer of the cycle $(1,2,\dots, n)$ is the cyclic subgroup $\ZZ/n\ZZ$ generated by the cycle $(1,2\dots, n)$ and, by the primality of $n$, this subgroup is contained in only two conjugacy classes of $S_n$, namely that of the identity and that of a single $n$-cycle $c$. By Remark~\ref{Rmk: Frobenius_Reciprocity}, we conclude that for any partition $\lambda$ of $n$ we have
\[m(S^{\lambda},\CC X) = \frac{1}{n}\left(\dim(S^{\lambda})+(n-1)\chi_{S^{\lambda}}(c)\right).\]
The value of the character $\chi_{S_{\lambda}}(c)$ when $c$ is an $n$-cycle is easily computed from the Murnaghan-Nakayama rule~\cite[Problem 4.45]{FH}, which states that
\[\chi_{S_{\lambda}}(c) = \sum_\mu (-1)^{r(\mu)}\]
where the sum is taken over the hooks $\mu$ of length $n$ contained in $\lambda$ and the integer $r(\mu)$ is defined as one less than the number of rows of $\mu$. Since $\lambda$ is a partition of $n$ it contains a hook of length $n$ if and only if $\lambda$ is itself a hook. We conclude that for the trivial and alternating representations the equalities $m(S^{1^n}, \CC X)=m(S^{(n)},\CC X)=1$ hold and that for every other hook with horizontal and vertical legs of lengths $0<a,b < n$ with $n=1+a+b$ the inequalities 
\[\frac{m(S^{\lambda}, \CC X )}{\dim(S^{\lambda})}\leq \frac{1}{n}\left(1 + \frac{n-1}{\binom{n-1}{a}}\right)\leq \frac{2}{n}\] are satisfied, 
where the middle inequality comes from the hook length formula~\cite[Formula 4.12]{FH}. Combining the previous observations we conclude that the inequality $\frac{m(S^{\lambda}, \CC X )}{\dim(S {\lambda})}\leq \frac{2}{n}$ holds for every nontrivial partition, except $1^n$, allowing us to leverage existing mixing rate estimates for conjugation-invariant random walks on $S_n$ to obtain new mixing rate estimates for $X$. Applying this to the transposition walk estimates from~\cite[Theorem 5]{D} we obtain the following Corollary which provides the total variation bounds needed for Corollary~\ref{Cor: Annealing}.

\begin{corollary} For a prime number $n$ let $X$ be the set of tours of cities $1,\dots, n$, and let $Q$ be the probability distribution on $S_n$ given by $Q(id)=1/n$ and $Q(\tau)=2/n^2$ for all transpositions $\tau$. If $q=\sum_{g\in S_n} Q(g)e_g$ then the following inequality holds for every initial state $x_0\in X$ and every stage $N=\frac{n\log(n)}{2}+cn$ for $c>0$
\[\| q^{(N)}\cdot x_0 -\overline{u}\|_{\rm TV}^2\leq \frac{2a^2e^{-4c}}{n} + \frac{\left(2/n-1\right)^{2N}}{4}\]  
where $a$ is a universal constant (i.e.,, independent of the values of $n$ and $N$).
\end{corollary}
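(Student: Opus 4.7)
The plan is to apply Corollary~\ref{Cor: Conjugation_invariant}, which is available because $Q$ is constant on conjugacy classes of $S_n$ (being supported on the identity and on the single class of transpositions). Writing $Q = \sum_\lambda a_\lambda \chi_{S^\lambda}$ with the sum indexed by partitions of $n$, the corollary yields
\[\|q^{(N)}\cdot e_{x_0} - \overline{u}_X\|_{\rm TV}^2 \leq \frac{1}{4}\sum_{\lambda \neq (n)} m(S^\lambda, \CC X)\dim(S^\lambda)\left(\frac{a_\lambda\, n!}{\dim(S^\lambda)}\right)^{2N},\]
so the task reduces to splitting this sum into the two summands on the right-hand side of the claim.

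I would first isolate the contribution of the sign representation $S^{1^n}$. Since $n$ is an odd prime, $(-1)^{n-1}=1$, and the Murnaghan--Nakayama computation in the paragraph preceding the corollary gives $m(S^{1^n},\CC X)=1$. A direct evaluation using $\chi_{S^{1^n}}(\mathrm{id})=1$ and $\chi_{S^{1^n}}(\tau)=-1$ yields $a_{1^n}\,n!/\dim(S^{1^n}) = 1/n - (n-1)/n = 2/n - 1$, so the sign representation contributes exactly $\tfrac{1}{4}(2/n-1)^{2N}$, producing the second summand of the stated bound. For every other nontrivial partition $\lambda \neq (n),\, 1^n$ I would apply the inequality $m(S^\lambda,\CC X)/\dim(S^\lambda) \leq 2/n$ established just before the corollary, giving
\[\frac{1}{4}\sum_{\lambda \neq (n),\,1^n} m(S^\lambda,\CC X)\dim(S^\lambda)\left(\frac{a_\lambda\, n!}{\dim(S^\lambda)}\right)^{2N} \leq \frac{2}{n}\cdot\frac{1}{4}\sum_{\lambda \neq (n)}\dim(S^\lambda)^2\left(\frac{a_\lambda\, n!}{\dim(S^\lambda)}\right)^{2N},\]
where I reinserted the sign term on the right, which only weakens the inequality.

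The right-hand side above is precisely the Diaconis--Shahshahani upper-bound-lemma expression for the random transposition walk on $S_n$ itself; by~\cite[Theorem 5]{D}, for $N = \tfrac{1}{2} n\log n + cn$ with $c>0$ it is bounded by $a^2 e^{-4c}$ for a universal constant $a$ (the square of the classical total-variation bound $a\,e^{-2c}$, with numerical factors absorbed into the constant). Adding the two contributions yields the claim. The main obstacle is bookkeeping rather than conceptual: one must verify the precise form of the constant extracted from~\cite[Theorem 5]{D} under the chosen laziness $Q(\mathrm{id})=1/n$, which is exactly the standard random-transposition walk where at each step one picks $(i,j)\in[n]^2$ uniformly and transposes (holding when $i=j$), and confirm that the multiplicity bound cited above does cover every nontrivial partition other than $1^n$; both points are handled in the paper's own discussion of the hook and non-hook cases.
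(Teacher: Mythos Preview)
Your proposal is correct and follows exactly the approach indicated in the paper: separate the sign representation (whose multiplicity in $\CC X$ equals $1$ and whose eigenvalue is $2/n-1$) to produce the second summand, then use the multiplicity bound $m(S^{\lambda},\CC X)/\dim(S^{\lambda})\leq 2/n$ for all remaining nontrivial $\lambda$ to reduce to $2/n$ times the classical Diaconis--Shahshahani sum, which is controlled by~\cite[Theorem~5]{D}. The only cosmetic point is that the non-hook case (where $\chi_{S^\lambda}(c)=0$ and hence $m(S^\lambda,\CC X)/\dim(S^\lambda)=1/n$) is implicit rather than spelled out in the paper, but your final sentence already flags this.
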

\end{example}

\section{Switched random walks}
\label{Sec: 4}
If $Q_1,\dots, Q_m$ are probability distributions on $G$ and $X$ is a $G$-set then a choice of initial state $x_0\in X$ determines a dynamical system which we call a {\it switched random walk} on $X$. The switched random walk is a family of random variables $h^{(w)}_k$ as $w$ ranges over all words $w_1w_2\dots $ with $w_i\in [m]$ and $k\in \NN$ which describe the state of our system at time $k$ when the mixing strategies $Q_i$ have been switched according to the word $w$ (i.e.,, where strategy $Q_{w_i}$ has been used at stage $i$ for $i\leq k$). More formally, the switched random walk starting at $x_0\in X$ is constructed as follows:

\begin{enumerate}
\item Sample $[m]\times \NN$ independent elements of $G$ with $g_{i,j}$ having distribution $Q_i$. 
\item For each word $w_1w_2\dots w_N$ of length $N$ having letters $w_i\in [m]$ and $k=1,\dots , N$ let $h_k^{w}:=g_{w_N,N}\dots g_{w_2,2}g_{w_1,1}(x_0)$.

\end{enumerate}

We say that the switched random walk {\it converges to the uniform distribution} if $X$ gets evenly mixed as time passes, regardless of the initial state $x_0\in X$ and the order in which the $Q_i$ have been chosen. Quantitatively, this means that
\[\lim_{N\rightarrow \infty} \left(\max_{x_0\in X, w: |w|=N} \|q^{w}\cdot e_{x_0}-\overline{u}\|_{\rm TV} \right) = 0\] 
where $q^{w}$ is defined as the product $q^{w_1}q^{w_2}\dots q^{w_n}$ where $q^{j}:=\sum_{g\in G} Q_j(g)e_g$, so $q^{w}\cdot e_{x_0}$ encodes the distribution of $h^{w}_N$.

In this Section we address the problem of convergence of switched random walks. We define the {\it Fourier joint spectral radius} relative to $X$ of a set of distributions $Q_1,\dots, Q_m$, denoted 
by $\overline{\omega}_X(Q_1,\dots, Q_m)$ and prove Theorem~\ref{Thm: Fourier_jsrs}, which shows that this quantity captures the long-term behavior of switched random walks (see Section~\ref{Sec:Fourier-jsrs}). The effective estimation of Fourier jsrs is discussed in the final section~\ref{Sec: Estimation_Fourier_jsrs}.

\subsection{Fourier joint spectral radii.}
\label{Sec:Fourier-jsrs}

If $A_1,\dots, A_m$ are a set of $n\times n$ matrices with complex entries, then their joint spectral radius, introduced by Rota and Strang in ~\cite{RS}, is defined as
\[{\rm jsr}(A_1,\dots, A_m):=\lim_{N\rightarrow \infty} \max_{w: |w|=N}\|A_{w_1}A_{w_2}\dots A_{w_N}\|^{\frac{1}{N}}.\]
It is known that this limit always exists and that its value is independent of the chosen matrix norm. 

\begin{definition}
The {\it Fourier joint spectral radius} of the distributions $Q_1,\dots, Q_m$ on $G$ relative to $X$ is the number 
\[\overline{\omega}_X(Q_1,\dots, Q_m) := \max_{{\rm triv}\neq \rho_j\in \CC X } \left\{{\rm jsr}\left(\widehat{Q_1}(\rho_j),\dots, \widehat{Q_m}(\rho_j)\right)\right\}\] 
where the maximum is taken over the nontrivial irreducible representations $\rho_j$ of $G$ which appear in the module $\CC X$ and ${\rm jsr}$ denotes the joint spectral radius of a set of matrices. 
\end{definition}

\begin{proof}[Proof of Theorem~\ref{Thm: Fourier_jsrs}] 

Since any two norms in a finite-dimensional vector space are equivalent we know that there exist positive constants $C_1,C_2$ such that the following inequality holds for every word $w$ of length $N$ and $x_0\in X$
\[C_1 \|q^{(w)}\cdot e_{x_0} -\overline{u}\|_2\leq  \|q^{(w)}\cdot e_{x_0} -\overline{u}\|_1\leq C_2 \|q^{(w)}\cdot e_{x_0} -\overline{u}\|_2.\]
and therefore the equality in the Theorem is equivalent to 
\[\lim_{N\rightarrow \infty} \left(\max_{x, w:|w|=N}\|q^{(w)}\cdot e_{x_0} -\overline{u}\|_2\right)^{\frac{1}{N}}=\overline{\omega}_X\left(Q_1,\dots, Q_m\right).\]
For any word $w$ of length $N$ the inequality 
\[\max_{x_0\in X} \|q^{(w)}\cdot e_{x_0} -\overline{u}\|_2 \geq \sqrt{\frac{\sum_{x\in X} \|q^{(w)}\cdot e_x -\overline{u}\|_2^2}{|X|}} =\frac{\|q^{(w)}-\overline{u}_G\|_{\rm Fb}}{\sqrt{|X|}}\]
holds and by the submutiplicativity of the Frobenius norm the inequality 
\[\max_{x_0\in X} \|q^{(w)}\cdot e_{x_0} -\overline{u}\|_2\leq \|q^{(w)}-\overline{u}_G\|_{\rm Fb}\]
holds. The equality of the Theorem is therefore equivalent to
\[\lim_{N\rightarrow \infty} \left(\max_{w:|w|=N}\|q^{(w)} -\overline{u}_G\|_{\rm Fb}\right)^{\frac{1}{N}}=\overline{\omega}_X\left(Q_1,\dots, Q_m\right).\]
By Lemma~\ref{lem: orto} and the orthogonal invariance of the Frobenius norm we know that for every word $w$ of length $N$ the following equality holds

\[\|q^{(w)}-\overline{u}_G\|_{\rm Fb}^2 = \sum_{V_j\neq {\rm triv}} m(V_j,\CC X) \|\hat{Q}(\rho_j)_{w_1}\hat{Q}(\rho_j)_{w_2}\dots \hat{Q}(\rho_j)_{w_N}\|^2_{\rm Fb}\]

Taking $N$-th roots on both sides and letting $R$ denote the number of irreducible representations of $G$ appearing in $\CC X$ we obtain the inequality
\begin{tiny}
\[\|q^{(w)}-\overline{u}_G\|_2^{\frac{1}{N}}\leq  \left(R\max_{\rho_j\neq {\rm triv}} m(V_j, \CC X)\right)^{1/2N}  \left(\max_{{\rm triv}\neq \rho_j\in \CC X}\max_{w: |w|=N}\|\hat{Q}(\rho_j)_{w_1}\hat{Q}(\rho_j)_{w_2}\dots \hat{Q}(\rho_j)_{w_N}\|^2_{\rm Fb}\right)^{1/2N} \]
\end{tiny}
and therefore the inequality
\begin{tiny}
\[\max_{w: |w|=N}\|q^{(w)}-\overline{u}\|_2^{\frac{1}{N}}\leq  \left(R \max_{\rho_j\neq {\rm triv}} m(V_j, \CC X)\right)^{1/2N}  \left(\max_{{\rm triv}\neq \rho_j\in \CC X}\max_{w: |w|=N}\|\hat{Q}(\rho_j)_{w_1}\hat{Q}(\rho_j)_{w_2}\dots \hat{Q}(\rho_j)_{w_N}\|^2_{\rm Fb}\right)^{1/2N} \]
\end{tiny}
which, letting $N\rightarrow \infty$ implies that 
\[\lim_{N\rightarrow \infty} \left(\max_{w: |w|=N} \|q^{(w)}-\overline{u}\|_{2}\right)^{1/N} \leq  \overline{\omega}_X(Q_1,\dots, Q_m)\] 

For the opposite inequality note that for every irreducible representation $V_t$ appearing in $\CC X$ and every word $w$ of length $N$ we have 
\[\|\hat{Q}(\rho_t)_{w_1}\hat{Q}(\rho_t)_{w_2}\dots \hat{Q}(\rho_t)_{w_N}\|_{\rm Fb}\leq \|q^{(w)}-\overline{u}_G\|_{\rm Fb}\]
and therefore taking $N$-th roots, maximizing over $w$, and letting $N\rightarrow \infty$, we have
\[{\rm jsr}\left(\hat{Q_1}(\rho_t),\dots,\hat{Q_m}(\rho_t)\right)\leq \lim_{N\rightarrow \infty}\max_{w: |w|=N}\|q^{(w)}-\overline{u}_G\|_{2}^{1/N}.\]
We conclude that the right-hand side is bounded below by $\overline{\omega}_X(Q_1,\dots, Q_m)$ proving the equality in the Theorem.
Since the total variation distance between two probability distributions is bounded by one, the equality 
\[\lim_{N\rightarrow \infty} \left(\max_{x, w:|w|=N}\|q^{(w)}\cdot e_{x_0} -\overline{u}\|_{\rm TV}\right)^{\frac{1}{N}}=\overline{\omega}_X\left(Q_1,\dots, Q_m\right)\]
implies that $\overline{\omega}_X\left(Q_1,\dots, Q_m\right)\leq 1$ and we will show that $Q_1,\dots, Q_m$ has the adversarial mixing property if and only if the strict inequality holds.
If $\overline{\omega}_X(Q_1,\dots, Q_m)<1$ and $\alpha$ is any real number with $\overline{\omega}_X(Q_1,\dots, Q_m)<\alpha<1$ then there exists an integer $N_0$ such that for every initial $x_0\in X$ and word $w$ of length $N\geq N_0$ we have
\[\|q^{(w)}\cdot e_{x_0}-\overline{u}\|_{\rm TV}\leq \alpha^N,\]
proving the convergence to the uniform distribution, since $\alpha^N$ converges exponentially to zero. Conversely, if $\overline{\omega}_X(Q_1,\dots, Q_m)=1$, then there exists a representation $\rho_t$ appearing in $\CC X$ such that ${\rm jsr}\left(\hat{Q_1}(\rho_t),\dots, \hat{Q_m}(\rho_t)\right)=1$. By~\cite[Theorem 2]{Breuillard}, $\limsup\max_{w}\Lambda(\prod_{w_i}\hat{Q_{w_i}}(\rho_t))=1$, where $\Lambda(A)$ denotes the magnitude of the largest eigenvalue of the matrix $A$. As a result, given $\epsilon>0$ there exists a sequence of integers $n_j$ and words $w^{j}$ of length $n_j$ such that $\hat{Q_{w^j_1}}(\rho_t)\dots \hat{Q_{w^j_{n_j}}}(\rho_t)$ has an eigenvalue of size at least $(1-\epsilon)$ and therefore its Frobenius norm is at least $1-\epsilon$. By Corollary~\ref{Cor: good_and_bad} for every such word there exists an initial state $x_j\in X$ such that
\[\|q^{(w^{j})}\cdot e_{x_j}-\overline{u}\|_{\rm TV}\geq \sqrt{\frac{(1-\epsilon)^2}{4|X|}}\]
Since $X$ is finite, there is an initial state $x^*$ which appears infinitely many times among the $x_j's$, and we conclude that the switched random walk determined by $Q_1,\dots, Q_m$ starting from $x^*$ does not converge.
\end{proof}

At this point the skeptical reader may wonder whether the theory is trivial in the sense that the equality $\overline{\omega}_X(Q_1,\dots,Q_m)=\max_j\left( \overline{\omega}_X(Q_j)\right)$ holds in general (case in which switching the random walk can never be worst than permanently using some of its defining distributions). The following example shows that this is not the case even for $3\times 3$ doubly stochastic matrices. 

\begin{example} [Non-triviality] \label{example: non-triviality} Consider the following two probability distributions in $S_3$
\[
\begin{tabular}{c|cccccc}
	& e & (23) & (12) & (123) & (132) & (13)\\
\hline
$Q_1$ & 2/8 & 1/8 &  1/8   & 2/8 & 1/8 & 1/8 \\ 
$Q_2$ & 1/8 & 1/8 &  1/8   & 2/8 & 1/8 & 2/8 \\ 
\end{tabular}
\]
Their action in the permutation module $M^{(2,1)}$ is given by the matrices
\begin{tiny}
\[
M_1 =  \left(\begin{array}{ccc}
3/8 & 1/4 & 3/8\\
3/8 & 3/8 & 1/4\\
1/4 & 3/8 & 3/8\\
\end{array}\right)\text{ , }
M_2 = \left(\begin{array}{ccc}
1/4 & 1/4 & 1/2 \\
3/8 & 3/8 & 1/4 \\
3/8 & 3/8 & 1/4 \\
\end{array}\right)
.\]
\end{tiny}
Their Fourier jsr relative to $M^{(2,1)}$ is equal to the jsr of their Fourier transforms in the representation $S^{(2,1)}$, namely the matrices:
\[
N_1 =  \left(\begin{array}{cc}
0.0625  &  0.108253\\
-0.108253 & 0.0625\\
\end{array}\right)\text{ , }
N_2 = \left(\begin{array}{cc}
-0.125    &  0\\
-0.216506 & 0\\
\end{array}\right)
.\]
Their spectral radii $\Lambda$ satisfy $\Lambda(N_1)=0.125$, $\Lambda(N_2)= 0.125$ and $\Lambda(N_1N_2)=0.03125$.
As a result
$\Lambda(N_1N_2)>\max\left(\Lambda(N_1),\Lambda(N_2)\right)^2$ and therefore
\[\overline{\omega}_{M^{(2,1)}}(M_1,M_2)\geq \Lambda(N_1N_2)^{1/2} > \max_i\left(\Lambda(N_i)\right) = \max\left(\overline{\omega}_{M^{(2,1)}}(M_1),\overline{\omega}_{M^{(2,1)}}(M_2)\right).\]
\end{example}
\begin{remark} The example above notwithstanding, there are cases beyond the trivial situation of commuting matrices where switching {\it does not} make mixing more elusive. For instance, if the $Q_i$ are {\it symmetric distributions}, in the sense that $Q_i(g)=Q_i(g^{-1})$ for all $g\in G$, then the matrices $\hat{Q}_i(\rho_j)$ are hermitian and in particular, their spectral radius coincides with their operator norm $\|\hat{Q}_i(\rho_j)\|$ (matrices with this property are called radial and have been classified~\cite{radial}). Since the operator norm is submultiplicative, for any word $w$ of length $N$ we have the inequality
\[\|\hat{Q}_{w_i}(\rho_j)\cdots \hat{Q}_{w_N}(\rho_j)\|^{1/N}\leq \max_t \|\hat{Q}_{t}(\rho_j)\|\] which implies that the equality 
\[\overline{\omega}_X(Q_1,\dots, Q_m)=\max_j \left(\overline{\omega}_X(Q_j)\right)\]
holds for every $G$-set $X$ and {\it for symmetric distributions} $Q_1,\dots, Q_m$ or, more generally, for distributions whose Fourier transforms are radial.
\end{remark}

\begin{remark} By Birkhoff's Theorem, the convex hull of permutation matrices coincides with the set of doubly stochastic matrices. It follows that doubly stochastic matrices are precisely the possible random walks on $M^{(n-1,1)}$ induced by a distribution $Q$ on $S_n$. Since $M^{(n-1,1)}={\rm triv} \oplus S^{(n-1,1)}$ it follows that for a single distribution the number $\overline{\omega}_{M^{(n-1,1)}}(Q)$ coincides with the SLEM (Second largest eigenvalue in magnitude) of the chain studied in~\cite{DP, DP2} for the design of fast-mixing chains. We can think of Fourier spectral radii as a generalization of this quantity for several distributions and arbitrary symmetries.
\end{remark}

\subsection{Estimation of Fourier jsrs}
\label{Sec: Estimation_Fourier_jsrs}
The computation of the jsr of a given set of matrices is a surprisingly difficult problem. As mentioned in the introduction, it is known to be {\it undecideable} whether the jsr of a pair of matrices is bounded by one and it is unknown whether checking if it is strictly bounded by one is decideable. Nevertheless the seminal work of Parrilo and Jabjabadie~\cite{PJ}, Ahmadi and Jungers~\cite{AJ1} and Ahmadi, de Klerk and Hall~\cite{PN}, among others, has provided us with sum-of-squares algorithms which are capable of approximating jsrs to arbitrary accuracy (albeit at an often significant computational effort which cannot be predicted in advance as the undecideability results show). In this section we extend the results of~\cite{PN} to polynomial norms expressible via hermitian sums of squares, allowing us to estimate join spectral radii for matrices with complex entries, the case of interest for the computation of Fourier jsrs. Note that the extension is not completely trivial, since a norm on the underlying real vector space of $\CC^n$ is not generally a complex norm because the equality $\|\lambda x\|=|\lambda|\|x\|$ needs to hold for arbitrary {\it complex} numbers $\lambda$.

We begin by explaining the general approach for the estimation of jsrs via sums of squares. Recall that the jsr of a set $A_1,\dots, A_m$ of $n\times n$ matrices with complex entries is a limit which can be computed using any matrix norm. If we use a matrix norm $\|\bullet\|_{\rm op}$ which is induced by a norm $\|\bullet\|$ on vectors in $\CC^n$, then the submultiplicativity of induced norms implies that the inequality
\[{\rm jsr}(A_1,\dots, A_m)\leq \max_j \|A_j\|_{\rm op}\]
holds. A basic result of Rota and Strang~\cite{RS} states that such inequalities give arbitrarily good estimates for jsr's in the sense that
\[{\rm jsr}(A_1,\dots, A_m) = \inf_{\|\bullet\|}\left(\max_j \|A_j\|_{\rm op}\right)\]
as the infimum runs over all norms in $\CC^n$.

Over the real numbers we know from results of~\cite{PN} that arbitrary norms can be uniformly approximated by polynomial norms (i.e.,, by norms of the form $V(x)=f^{1/{2d}}(x)$ where $f$ is some sum-of-squares form of degree $2d$), proving that the optima over polynomial norms $V$ of increasing degrees satisfying $V(A_ix)\leq \gamma V(x)$ would eventually prove that a real number $\gamma$ is an upper bound for the jsr of $A_1,\dots,A_m$ if this is indeed the case. The following Lemma extends this over the complex numbers, proving that all norms on a complex vector space can be approximated via norms defined by {\it hermitian} sums of squares.

\begin{theorem}[Complex polynomial norms] \label{thm: CPN} Let $\|\bullet\|$ be a norm in $\CC^n$. There exists a sequence of hermitian sums-of-squares
\[F_{2d}(z):=\sum_{i=1}^{N(2d)} w_i |\langle z,y_i\rangle|^{2d}\]
for some $N(2d)\in \NN$, points $y_i\in \CC^n$ and real positive weights $w_i$ for $i=1,\dots, N(2d)$ which satisfy the following properties:
\begin{enumerate}
\item The function $n_{2d}(z):= F_{2d}(z)^{\frac{1}{2d}}$ is a norm in $\CC^d$.
\item The inequality $n_{2d}(z)\leq \|z\|$ holds for $z\in \CC^d$.
\item The sequence $n_{2d}(z)$ converges to $\|z\|$ uniformly on compact subsets of $\CC^d$.
\end{enumerate}
\end{theorem}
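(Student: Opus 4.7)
The plan is to adapt the standard real approach---density of $L^{2d}$-averages of linear forms in the space of norms---but using absolute values $|\langle z, y\rangle|$, since this keeps everything complex-homogeneous. I start from the dual norm $\|y\|^*:=\sup_{\|z\|\leq 1}|\langle z,y\rangle|$ on $\CC^n$ (with $\langle z,y\rangle=\sum z_i\overline{y_i}$). Its unit sphere $S^*$ is compact, and by the bipolar theorem $\|z\|=\sup_{y\in S^*}|\langle z,y\rangle|$. The idea is that averaging $|\langle z,y\rangle|^{2d}$ over a sufficiently dense finite subset of $S^*$ and taking $(2d)$-th roots will produce norms converging to $\|\cdot\|$ from below.

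Concretely, for each $d$ I pick a $(1/d)$-net $\{y_1^{(d)},\dots,y_{N_d}^{(d)}\}\subseteq S^*$ with respect to the Euclidean metric, which exists by compactness with $N_d=O(d^{2n})$, padded if necessary so that the chosen $y_i$'s span $\CC^n$. Setting $w_i=1/N_d$, I define
\[F_{2d}(z):=\sum_{i=1}^{N_d} w_i\,|\langle z, y_i^{(d)}\rangle|^{2d},\qquad n_{2d}(z):=F_{2d}(z)^{1/2d}.\]
Each summand equals $|p_i(z)|^2$ for the holomorphic polynomial $p_i(z):=\langle z, y_i^{(d)}\rangle^d$, so $F_{2d}$ is manifestly a hermitian sum of squares as required by the statement.

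I then verify the three claims in order. (1) $n_{2d}$ is a norm on $\CC^n$: $\CC$-homogeneity follows from $|\langle\lambda z,y\rangle|^{2d}=|\lambda|^{2d}|\langle z,y\rangle|^{2d}$ for $\lambda\in\CC$; the triangle inequality is Minkowski's inequality for the $L^{2d}$-norm against the discrete probability measure giving mass $w_i$ to $y_i$; definiteness uses the spanning assumption on the $y_i$'s. (2) The upper bound $n_{2d}(z)\leq\|z\|$ is immediate from $|\langle z,y_i\rangle|\leq\|z\|\|y_i\|^*=\|z\|$ together with $\sum w_i=1$. (3) For pointwise convergence I use the elementary lower bound $n_{2d}(z)\geq N_d^{-1/(2d)}\max_i|\langle z,y_i\rangle|$; as $d\to\infty$ the factor $N_d^{-1/(2d)}\to 1$ since $N_d$ is polynomial in $d$, and $\max_i|\langle z,y_i\rangle|\to\sup_{y\in S^*}|\langle z,y\rangle|=\|z\|$ as the nets become dense. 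Uniform convergence on compact sets then follows from Arzel\`a--Ascoli: each $n_{2d}$ is $1$-Lipschitz with respect to $\|\cdot\|$ (being a norm bounded above by it), so the family is equicontinuous, and pointwise convergence of an equicontinuous family to a continuous limit is uniform on compacts.

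The main obstacle---and the reason the excerpt flags this extension as non-trivial---is the complex-homogeneity requirement $n_{2d}(\lambda z)=|\lambda|n_{2d}(z)$ for all $\lambda\in\CC$: a generic real polynomial norm on $\CC^n\cong\RR^{2n}$ satisfies this only for $\lambda\in\RR$. Choosing the building blocks $|\langle z,y\rangle|^{2d}$, which have bidegree $(d,d)$ in $(z,\bar z)$ rather than total degree $2d$ in real coordinates, is exactly what restores full $\CC$-homogeneity. Once this choice is fixed, the remaining arguments closely mirror the real case of \cite{PN}; the only quantitative check is balancing net density against $N_d$ so that $N_d^{1/(2d)}\to 1$, which is automatic for any polynomially sized net.
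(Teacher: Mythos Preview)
Your argument is correct and follows a genuinely different route from the paper's. The paper defines $F_{2d}$ as an integral, namely the normalized Lebesgue average $\frac{1}{\mathrm{Vol}(B^\circ)}\int_{B^\circ}|\langle z,y\rangle|^{2d}\,dy$ over the polar body $B^\circ$, and then invokes the complex Tchakaloff cubature theorem of Curto--Fialkow to rewrite this integral as a finite positive combination $\sum w_i|\langle z,y_i\rangle|^{2d}$. Definiteness of $n_{2d}$ comes from the integral vanishing only at $z=0$, the bound $n_{2d}(z)\le\|z\|$ from $|\langle z,y\rangle|\le\|z\|$ on $B^\circ$, and the convergence from a volume estimate: for each $x'$ on the unit sphere one finds a dual point $b$ with $\langle x',b\rangle=1$ and shows that a fixed-volume neighborhood of $b$ inside $B^\circ$ contributes at least $(1-2\epsilon)^{2d}$ to the integrand, so $n_{2d}(x')\ge(1-2\epsilon)\bigl(\mathrm{Vol}(B_i^\circ)/\mathrm{Vol}(B^\circ)\bigr)^{1/(2d)}$.

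Your construction bypasses both the integral and Tchakaloff entirely: you place uniform weights on a $(1/d)$-net in the dual \emph{sphere} $S^*$, get the norm axioms directly from Minkowski and a spanning check, and obtain convergence from the single-term lower bound $n_{2d}(z)\ge N_d^{-1/(2d)}\max_i|\langle z,y_i\rangle|$ together with the polynomial growth of $N_d$. This is more elementary and fully constructive, with an explicit bound $N_d=O(d^{2n})$ on the number of terms; the paper's cubature also yields a polynomial bound (the dimension of bihomogeneous polynomials of bidegree $(d,d)$), but the nodes $y_i$ and weights $w_i$ are only guaranteed to exist, not exhibited. Conversely, the paper's integral formulation makes the approximating norms canonical (independent of any net choice) and handles the lower bound uniformly over the sphere in one stroke via the volume ratio, whereas you pass through pointwise convergence plus an equicontinuity upgrade. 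Both routes exploit the same key observation you highlight: using $|\langle z,y\rangle|^{2d}$ rather than an arbitrary real SOS of degree $2d$ is what buys $\CC$-homogeneity.
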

\begin{proof} Let $B\subseteq \CC^n$ be the unit ball for $\|\bullet\|$ and let $B^{\circ}$ be its polar set
\[B^{\circ}:=\{y\in \CC^n: \|y\|_*\leq 1\}\]
where $\|y\|_*:=\sup_{x\in B} |\langle x,y\rangle|$ is the norm dual to $\|\bullet\|$. Let $dy$ denote the Lebesgue measure in $\CC^n$ and define the measure  $\mu(A):=\frac{1}{{\rm Vol}(B^{\circ})}\int_{B^{\circ}\cap A} dy$. Since $\mu$ has compact support, the generalized Tchakaloff Theorem--complex case of Curto and Fialkow~\cite[Theorem 3.1]{CF}, implies that for every degree $2d$ there exist $N_i(2d)\in \NN$, points $y_i\in B^{\circ}$ and positive weights $w_i$ for $i=1,\dots, N_i(2d)$ such that for every polynomial $f(z,\overline{z})$ of degree $2d$ the equality
\[\int_{\CC^n} f(z,\overline{z})d\mu(z) = \sum_{i=1}^{N(2d)} w_i f(y_i,\overline{y_i})\]
holds. In particular, for every $x\in \CC^n$ we have
\[F_{2d}(x):=\int_{\CC^n} |\langle x,y\rangle|^{2d}d\mu(z)=\frac{1}{{\rm Vol}(B^{\circ})}\int_{B^{\circ}} |\langle x,y\rangle|^{2d} dy = \sum_{i=1}^{N(2d)} w_i |\langle x,y_i\rangle|^{2d}\]
proving that $F_{2d}(x)$ is a sum of hermitian squares. Furthermore if $0=F_{2d}(\alpha)$ the integral expression implies that $\langle \alpha, y\rangle=0$ for $y\in B^{\circ}$ so $\|\alpha\|=0$. As a result, the linear map $\phi: \CC^n\rightarrow \CC^{N(2d)}$ sending $x$ to $(\langle x,y_i\rangle)_i$ is injective. It follows that the function $n_{2d}(x):=F_{2d}(x)^{\frac{1}{2d}}$, which is obtained from the $\ell_{2d}$-norm $\left(\sum |a_i|^{2d}\right)^{2d}$ in $\CC^{N(2d)}$ by composition with the injective linear map $\phi$, is automatically a norm on $\CC^d$, proving $(1)$.
For $(2)$ note that the inequality $\langle x,y\rangle\leq \|x\|\|y\|_*$ bounds the integral form of $F_{2d}(x)$ from above by $\|x\|^{2d}$, yielding the inequality $n_{2d}(x)\leq \|x\|$ for all $x\in \CC^n$. $(3)$ Let $S$ (resp $S^*$) be the unit sphere for the norm $\|\bullet\|$ (resp. for the norm $\|\bullet\|_*$). Given $\epsilon>0$, the compactness of $S$ implies that there are finitely many centers $a_1,\dots a_M$ in $S$ such that balls of norm $\|\bullet\|$ with radius $\epsilon$ centered at the $a_i$ cover $S$. For each $i$ let $b_i\in S^*$ be such that $\langle a_i,b_i\rangle = 1$ and define $B_i^{\circ}:=\{y\in B^{\circ}: \|y-b_i\|_*\leq \epsilon\}$. We claim that for every $\epsilon>0$ there exists $d$ such that $n_{2d}(x)\geq 1-3\epsilon$ for every $x'\in S$ simultaneously, proving $(3)$. To verify this claim take $x'\in S$ and assume $i$ is such that $\|x'-a_i\|<\epsilon$. For every $y'\in B_i^{\circ}$ we have 
\[ |\langle x',y'\rangle|=|\langle a_i,b_i\rangle+ \langle x',y'\rangle-\langle a_i,b_i\rangle|\geq 1-|\langle x',y'\rangle-\langle a_i,b_i\rangle|\geq 1-2\epsilon\] 
where the last inequality holds because
\[|\langle x',y'\rangle-\langle a_i,b_i\rangle|= |\langle x',y'-b_i\rangle+\langle x'-a_i,b_i\rangle|\leq \|y'-b_i\|_*+\|x'-a_i\|\leq 2\epsilon.\]
We conclude that for every $x'$ with $\|x'\|=1$ we have
\[n_{2d}(x')\geq (1-2\epsilon)\left(\min_{i=1,\dots, m}\frac{{\rm Vol}(B_i^{\circ})}{{\rm Vol}(B^{\circ})}\right)^{\frac{1}{2d}}\]
and the right hand side can be made larger than $1-3\epsilon$ by choosing a sufficiently large $d$.
\end{proof}

The previous Theorem shows that norms defined by sums of powers of hermitian squares of linear forms are sufficiently general so as to approximate all norms. Now we will consider a relaxation which has the advantage of being expressible via Hermitian semidefinite programming. To this end, let $\gamma\geq 0$ be any real number and let $L(z)$ be a hermitian polynomial in the variables $z_1,z_2,\dots, z_n$, and assume that they satisfy:

\begin{enumerate}
\item $L(z)$ is a hermitian sum-of-squares of forms of degree $d$ in $z_1,\dots, z_n$. In particular $L$ is real-valued in $\CC^n$.
\item There exists $\epsilon>0$ such that $L(z)\geq \epsilon \|x\|^{2d}$.
\item For every $(z,w)\in \CC^{n}\times \CC^n$ the inequality $w^*H_L(z)w\geq 0$ holds, where $H_L(z)$ denotes the (hermitian) Hessian matrix of $L(z)$. 
\item The inequalities $L(A_jz)\leq \gamma^{2d}L(z)$ hold.
\end{enumerate}

Theorem~\cite[Theorem 2.1]{PN} and condition $(1)$, which guarantees the correct behavior for scalar multiplication, imply that $V(z):=L(z)^\frac{1}{2d}$ is a complex norm. By $(4)$, its induced operator norm proves that ${\rm jsr}(A_1,\dots, A_m)\leq \gamma$. 
Conversely, if ${\rm jsr}(A_1,\dots, A_m)< \gamma$, then there exists an integer $d$ such that $F_{2d}(z)$ from Theorem~\ref{thm: CPN} satisfies items $(1)$,$(3)$ and $(4)$ with strict inequalities, so the set of $L$'s satisfying the above inequalities strictly are able to guarantee 
that ${\rm jsr}(A_1,\dots, A_m)< \gamma$ when this is the case. Quillen's positivity Theorem~\cite[Theorem 9.50]{BPT}, which says that every strictly positive byhomogeneous form $f(z,\overline{z})$ becomes a sum of hermitian squares (HSOS) when multiplied by a $\|z\|^{2r}$ for some sufficiently large integer $r$, can now be used to construct the desired hierarchy of hermitian semidefinite programs. More precisely, we have proven

\begin{corollary} If ${\rm jsr}(A_1,\dots,A_m)<\gamma$, then there exist integers $r,d>0$ and a real number $\epsilon>0$ such that the following hermitian semidefinite program is feasible:
\begin{enumerate}
\item $L(z)$ is a hermitian sum-of-squares of forms of degree $2d$ in $z_1,\dots,z_n$.
\item There exists $\epsilon>0$ such that $\|z\|^{2r}\left(L(z)-\epsilon \|x\|^{2d}\right)$ is HSOS.
\item The function $\|(z,w)\|^{2r}w^*H_L(z)w$ is HSOS, where $H_L(z)$ denotes the (hermitian) Hessian matrix of $L(z)$. 
\item The functions $\|z\|^{2r}\left(\gamma^{2d}L(z)-L(A_jz)\right)$ are HSOS.
\end{enumerate}
Conversely, any $L(z)$ satisfying the conditions above defines a norm $V(z):=L(z)^{\frac{1}{2d}}$ which provides a proof that ${\rm jsr}(A_1,\dots,A_m)\leq \gamma.$
\end{corollary}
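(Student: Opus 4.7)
The plan is to prove both directions using the Rota--Strang infimum formula, Theorem~\ref{thm: CPN}, and Quillen's positivity theorem. For the converse direction, assume $L$ satisfies (1)--(4). Item (1) together with \cite[Theorem 2.1]{PN} makes $V(z) := L(z)^{1/(2d)}$ positively $1$-homogeneous with the correct behavior under complex scaling; item (2) gives positive definiteness; and item (3) captures the convexity data needed for the triangle inequality. Thus $V$ is a norm on $\CC^n$. Restricting (4) away from the origin yields $V(A_j z) \le \gamma V(z)$ for every $j$, so the operator norm of $A_j$ with respect to $V$ is at most $\gamma$, and the standard bound for any induced norm gives ${\rm jsr}(A_1,\dots,A_m) \le \gamma$.

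For the forward direction, I would first use the Rota--Strang characterization ${\rm jsr} = \inf_{\|\cdot\|}\max_j \|A_j\|_{\rm op}$ to extract a norm $\|\cdot\|_0$ on $\CC^n$ and a number $\gamma'$ with ${\rm jsr}(A_1,\dots,A_m) < \gamma' < \gamma$ and $\|A_j z\|_0 \le \gamma' \|z\|_0$ for every $j$. Applying Theorem~\ref{thm: CPN} to $\|\cdot\|_0$ produces hermitian SOS approximants $n_{2d}(z) = F_{2d}(z)^{1/(2d)}$ converging uniformly on the unit sphere; for $d$ large enough, compactness and the gap $\gamma - \gamma'$ give the strict inequalities $F_{2d}(A_j z) < \gamma^{2d} F_{2d}(z)$ and $F_{2d}(z) \ge \epsilon \|z\|^{2d}$ for some uniform $\epsilon > 0$. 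Setting $L := F_{2d} + \delta \|z\|^{2d}$ for a small $\delta > 0$ retains these strict inequalities while making the hermitian Hessian of $L$ strictly positive definite for $z \ne 0$, so every condition of the corollary holds as a strictly positive bihomogeneous hermitian inequality. Quillen's positivity theorem \cite[Theorem 9.50]{BPT} then promotes each such inequality to a hermitian sum of squares after multiplication by $\|z\|^{2r}$ (respectively $\|(z,w)\|^{2r}$) for some integer $r$; taking the maximum of the resulting exponents yields the $r$ that makes the SDP feasible.

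The main obstacle I expect is securing condition (3) in the strict form required by Quillen's theorem. A direct computation shows that for $F_{2d}(z) = \sum_i w_i |\langle z, y_i\rangle|^{2d}$, the form $w^{\ast} H_{F_{2d}}(z) w$ is a sum of rank-one non-negative terms proportional to $|\langle z, y_i\rangle|^{2(d-1)}\,\bigl|\sum_j w_j (y_i)_j\bigr|^2$, which can degenerate to zero on nontrivial loci. The perturbation $L := F_{2d} + \delta \|z\|^{2d}$ outlined above fixes this --- its hermitian Hessian is a positive combination of the identity and a rank-one term, hence strictly positive definite for $z \ne 0$ --- but the bookkeeping needed to preserve the strict versions of conditions (2) and (4) while simultaneously gaining strict positivity in (3) is the most delicate part of the argument.
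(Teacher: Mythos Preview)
Your overall strategy matches the paper's: the converse via \cite[Theorem~2.1]{PN}, and the forward direction via Rota--Strang, Theorem~\ref{thm: CPN}, and Quillen. The paper does not introduce a perturbation; it simply takes $L=F_{2d}$ and invokes Quillen.

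There is, however, a genuine gap in your treatment of condition~(3). The perturbation $L=F_{2d}+\delta\|z\|^{2d}$ does make $H_L(z)$ positive definite as a matrix for each fixed $z\neq 0$, but Quillen's theorem on $\CC^{2n}$ requires the bihomogeneous form $(z,w)\mapsto w^*H_L(z)w$ to be strictly positive on the \emph{entire} unit sphere of $\CC^{2n}$. It is not: for $d\geq 2$ one computes $H_L(0)=0$, so the form vanishes on the locus $\{z=0\}$ of the sphere (and of course at $w=0$), and Quillen does not apply as you claim. The resolution is simpler than the route you propose and is already implicit in your own computation. Writing $c_i>0$ for the cubature weights to avoid the clash with the vector variable $w$, each summand of
\[
w^*H_{F_{2d}}(z)w \;=\; d^2\sum_i c_i\,|\langle z,y_i\rangle|^{2(d-1)}\,|\langle w,y_i\rangle|^2
\]
is the hermitian square $d^2c_i\,\bigl|\langle z,y_i\rangle^{d-1}\langle w,y_i\rangle\bigr|^2$ of a holomorphic form in $(z,w)$. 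Hence $w^*H_{F_{2d}}(z)w$ is HSOS as it stands, condition~(3) of the corollary holds for $L=F_{2d}$ with any $r\geq 0$, and neither the perturbation nor Quillen is needed for this item. Quillen is genuinely required only for~(2) and~(4), where your strict inequalities obtained from the gap $\gamma'<\gamma$ are correct.
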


Our final example illustrates the sum-of-squares approach for estimating Fourier jsrs. The values of the characters of the symmetric group are rational numbers and therefore we can limit ourselves to real sums-of-squares when computing Fourier jsrs for distributions on $S_n$.

\begin{example} 

\begin{figure}[h]
\centering
\includegraphics[width=8.9cm]{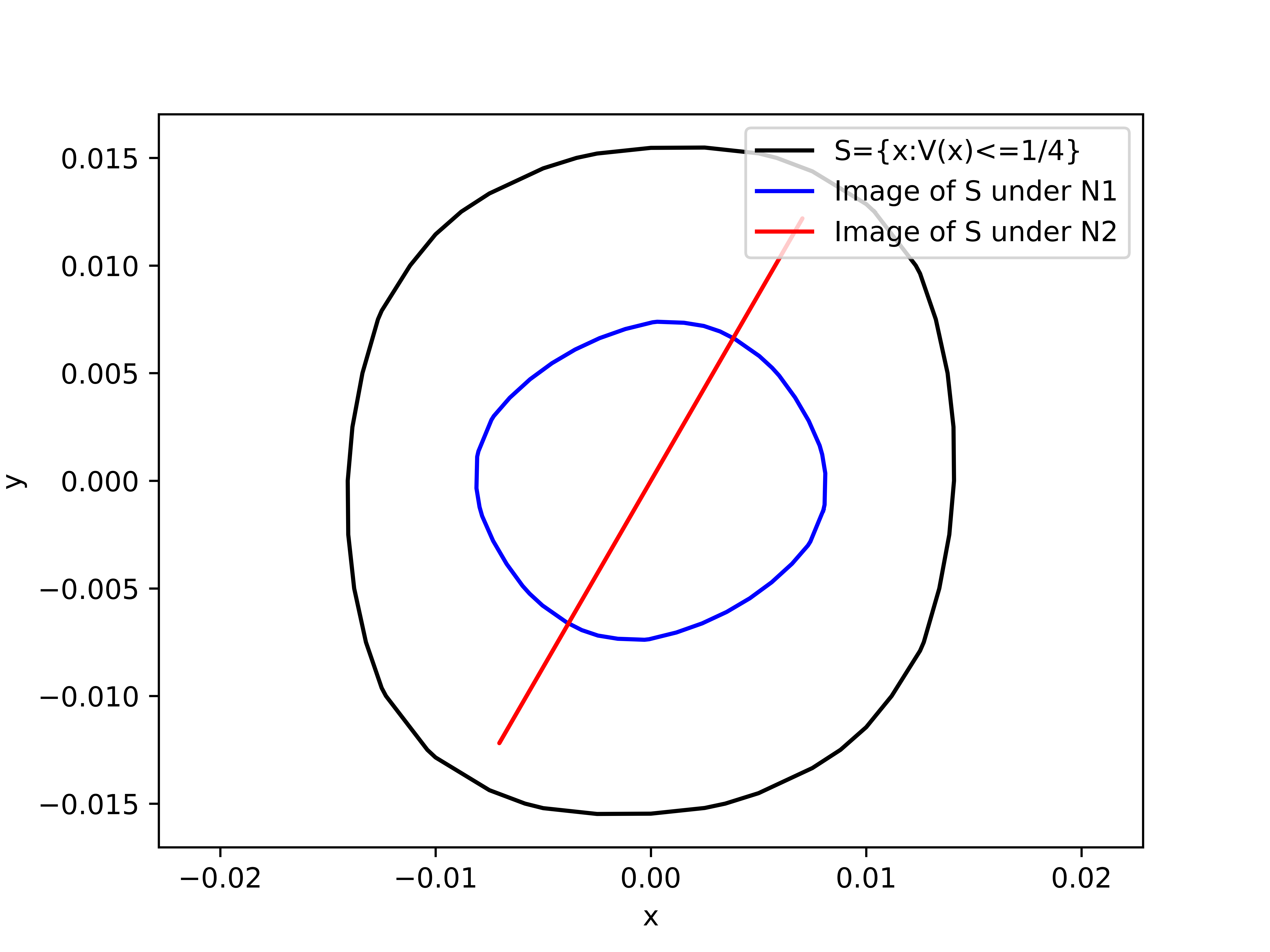} 
\caption{The ball of $V$ with radius $1/4$ contains the images under $N_1$ and $N_2$ of the unit ball of $V$.}
\end{figure}

We wish to estimate the Fourier  spectral radius of the distributions $Q_1,Q_2$ defined in Example~\ref{example: non-triviality} relative to $\CC S_3$. In the sign representation $\rho_{sgn}$ the fourier transforms satisfy $\hat{Q}_1(\rho_{\rm sgn})=1/4$ and $\hat{Q}_2(\rho_{\rm sgn})=0$, so we know that
\[\overline{\omega}(Q_1,Q_2)=\max\left(\frac{1}{4}, {\rm jsr}(N_1,N_2)\right)\]
where $N_1$ and $N_2$ are the fourier transforms on the irreducible representation $S^{(2,1)}$, computed explicitly in Example~\ref{example: non-triviality}.
To estimate the jsr of $N_1$ and $N_2$ we solve the optimization problem above. This problem constructs a polynomial $F(x)$ of degree $2d=4$ such that $V(x):=F(x)^{\frac{1}{2d}}$ is a norm which certifies that the inequality ${\rm jsr}(N_1,N_2)\leq 0.25$ holds, proving that $\overline{\omega}(Q_1,Q_2)=1/4$.
\end{example}

\begin{bibdiv}
  \begin{biblist}

\bib{AJ1}{article}{
   author={Ahmadi, Amir Ali},
   author={Jungers, Rapha\"{e}l M.},
   title={Lower bounds on complexity of Lyapunov functions for switched
   linear systems},
   journal={Nonlinear Anal. Hybrid Syst.},
   volume={21},
   date={2016},
   pages={118--129},
   issn={1751-570X},
   review={\MR{3500076}},
   doi={10.1016/j.nahs.2016.01.003},
}

\bib{AJ2}{article}{
   author={Jungers, Rapha\"{e}l M.},
   author={Ahmadi, Amir Ali},
   author={Parrilo, Pablo A.},
   author={Roozbehani, Mardavij},
   title={A characterization of Lyapunov inequalities for stability of
   switched systems},
   journal={IEEE Trans. Automat. Control},
   volume={62},
   date={2017},
   number={6},
   pages={3062--3067},
   issn={0018-9286},
   review={\MR{3660593}},
   doi={10.1109/TAC.2017.2671345},
}

\bib{D}{book}{
   author={Diaconis, Persi},
   title={Group representations in probability and statistics},
   series={Institute of Mathematical Statistics Lecture Notes---Monograph
   Series},
   volume={11},
   publisher={Institute of Mathematical Statistics, Hayward, CA},
   date={1988},
   pages={vi+198},
   isbn={0-940600-14-5},
   review={\MR{964069}},
}

\bib{DP2}{article}{
   author={Boyd, Stephen},
   author={Diaconis, Persi},
   author={Parrilo, Pablo},
   author={Xiao, Lin},
   title={Fastest mixing Markov chain on graphs with symmetries},
   journal={SIAM J. Optim.},
   volume={20},
   date={2009},
   number={2},
   pages={792--819},
   issn={1052-6234},
   review={\MR{2515797}},
   doi={10.1137/070689413},
}

\bib{DP}{article}{
   author={Boyd, Stephen},
   author={Diaconis, Persi},
   author={Xiao, Lin},
   title={Fastest mixing Markov chain on a graph},
   journal={SIAM Rev.},
   volume={46},
   date={2004},
   number={4},
   pages={667--689},
   issn={0036-1445},
   review={\MR{2124681}},
   doi={10.1137/S0036144503423264},
}

\bib{BPT}{collection}{
   title={Semidefinite optimization and convex algebraic geometry},
   series={MOS-SIAM Series on Optimization},
   volume={13},
   editor={Blekherman, Grigoriy},
   editor={Parrilo, Pablo A.},
   editor={Thomas, Rekha R.},
   publisher={Society for Industrial and Applied Mathematics (SIAM),
   Philadelphia, PA; Mathematical Optimization Society, Philadelphia, PA},
   date={2013},
   pages={xx+476},
   isbn={978-1-611972-28-3},
   review={\MR{3075433}},
}

\bib{BT}{article}{
   author={Blondel, Vincent D.},
   author={Tsitsiklis, John N.},
   title={The boundedness of all products of a pair of matrices is
   undecidable},
   journal={Systems Control Lett.},
   volume={41},
   date={2000},
   number={2},
   pages={135--140},
   issn={0167-6911},
   review={\MR{1831027}},
   doi={10.1016/S0167-6911(00)00049-9},
}
	
\bib{PN}{article}{
   author={Ahmadi, Amir Ali},
   author={de Klerk, Etienne},
   author={Hall, Georgina},
   title={Polynomial norms},
   journal={SIAM J. Optim.},
   volume={29},
   date={2019},
   number={1},
   pages={399--422},
   issn={1052-6234},
   review={\MR{3907933}},
   doi={10.1137/18M1172843},
}	

\bib{CF}{article}{
   author={Curto, Ra\'{u}l E.},
   author={Fialkow, Lawrence A.},
   title={A duality proof of Tchakaloff's theorem},
   journal={J. Math. Anal. Appl.},
   volume={269},
   date={2002},
   number={2},
   pages={519--532},
   issn={0022-247X},
   review={\MR{1907129}},
   doi={10.1016/S0022-247X(02)00034-3},
}
	
\bib{DS1981}{article}{
   author={Diaconis, Persi},
   author={Shahshahani, Mehrdad},
   title={Generating a random permutation with random transpositions},
   journal={Z. Wahrsch. Verw. Gebiete},
   volume={57},
   date={1981},
   number={2},
   pages={159--179},
   issn={0044-3719},
   review={\MR{626813}},
   doi={10.1007/BF00535487},
}	
	
\bib{FH}{book}{
   author={Fulton, William},
   author={Harris, Joe},
   title={Representation theory},
   series={Graduate Texts in Mathematics},
   volume={129},
   note={A first course;
   Readings in Mathematics},
   publisher={Springer-Verlag, New York},
   date={1991},
   pages={xvi+551},
   isbn={0-387-97527-6},
   isbn={0-387-97495-4},
   review={\MR{1153249}},
   doi={10.1007/978-1-4612-0979-9},
}	
	
\bib{Hoeffding1963}{article}{
   author={Hoeffding, Wassily},
   title={Probability inequalities for sums of bounded random variables},
   journal={J. Amer. Statist. Assoc.},
   volume={58},
   date={1963},
   pages={13--30},
   issn={0162-1459},
   review={\MR{144363}},
}

\bib{PJ}{article}{
   author={Parrilo, Pablo A.},
   author={Jadbabaie, Ali},
   title={Approximation of the joint spectral radius using sum of squares},
   journal={Linear Algebra Appl.},
   volume={428},
   date={2008},
   number={10},
   pages={2385--2402},
   issn={0024-3795},
   review={\MR{2408034}},
   doi={10.1016/j.laa.2007.12.027},
}

\bib{Breuillard}{article}{
   author={Breuillard, Emmanuel},
   title={On the joint spectral radius},
   journal={ArXiv preprint},
   date={March 2021},
   doi={https://arxiv.org/abs/2103.09089},
}

\bib{Madras}{book}{
   author={Madras, Neal},
   title={Lectures on Monte Carlo methods},
   series={Fields Institute Monographs},
   volume={16},
   publisher={American Mathematical Society, Providence, RI},
   date={2002},
   pages={viii+103},
   isbn={0-8218-2978-5},
   review={\MR{1870056}},
   doi={10.1090/fim/016},
}

\bib{M1}{article}{
   author={Bernstein, Megan},
   title={A random walk on the symmetric group generated by random
   involutions},
   journal={Electron. J. Probab.},
   volume={23},
   date={2018},
   pages={Paper No. 26, 28},
   review={\MR{3779819}},
   doi={10.1214/18-EJP140},
}

\bib{M2}{article}{
   author={Bernstein, Megan},
   author={Nestoridi, Evita},
   title={Cutoff for random to random card shuffle},
   journal={Ann. Probab.},
   volume={47},
   date={2019},
   number={5},
   pages={3303--3320},
   issn={0091-1798},
   review={\MR{4021252}},
   doi={10.1214/19-AOP1340},
}
		
\bib{radial}{article}{
   author={Goldberg, M.},
   author={Zwas, G.},
   title={On matrices having equal spectral radius and spectral norm},
   journal={Linear Algebra Appl.},
   volume={8},
   date={1974},
   pages={427--434},
   issn={0024-3795},
   review={\MR{360643}},
   doi={10.1016/0024-3795(74)90076-7},
}
		
\bib{RS}{article}{
   author={Rota, Gian-Carlo},
   author={Strang, Gilbert},
   title={A note on the joint spectral radius},
   journal={Nederl. Akad. Wetensch. Proc. Ser. A 63 = Indag. Math.},
   volume={22},
   date={1960},
   pages={379--381},
   review={\MR{0147922}},
}		

\bib{PD}{article}{
   author={Boyd, Stephen},
   author={Diaconis, Persi},
   author={Xiao, Lin},
   title={Fastest mixing Markov chain on a graph},
   journal={SIAM Rev.},
   volume={46},
   date={2004},
   number={4},
   pages={667--689},
   issn={0036-1445},
   review={\MR{2124681}},
   doi={10.1137/S0036144503423264},
}	

\bib{PD2}{article}{
   author={Boyd, Stephen},
   author={Diaconis, Persi},
   author={Parrilo, Pablo},
   author={Xiao, Lin},
   title={Fastest mixing Markov chain on graphs with symmetries},
   journal={SIAM J. Optim.},
   volume={20},
   date={2009},
   number={2},
   pages={792--819},
   issn={1052-6234},
   review={\MR{2515797}},
   doi={10.1137/070689413},
}

\bib{Sagan}{book}{
   author={Sagan, Bruce E.},
   title={The symmetric group},
   series={Graduate Texts in Mathematics},
   volume={203},
   edition={2},
   note={Representations, combinatorial algorithms, and symmetric
   functions},
   publisher={Springer-Verlag, New York},
   date={2001},
   pages={xvi+238},
   isbn={0-387-95067-2},
   review={\MR{1824028}},
   doi={10.1007/978-1-4757-6804-6},
}	
\end{biblist}
\end{bibdiv}

\end{document}